\documentclass{article}

\usepackage[english]{babel}

\usepackage{amsmath,amssymb,setspace}
\usepackage{amsthm}
\usepackage[english]{babel}
\usepackage{amsfonts}
\usepackage{calligra}
\usepackage[T1]{fontenc}
\usepackage{xstring}
\usepackage{graphics}
\usepackage[numbers,sort&compress]{natbib}
\usepackage{multicol}
\date{}
\usepackage{longtable}
\usepackage{float}
\usepackage[margin=0.7 in]{geometry}
\usepackage{graphicx}
\usepackage{mathtools}
\usepackage{amsmath}
\usepackage{subcaption}
\usepackage{amsmath, amssymb, setspace}

\usepackage{amsthm}
\usepackage[english]{babel}
\usepackage{amsfonts}
\usepackage{calligra}
\usepackage[T1]{fontenc}
\usepackage{xstring}

\usepackage{amssymb}
\usepackage{amsthm}
\usepackage{amsmath}
\usepackage{lipsum}
\newtheorem{definition}{Definition}
\theoremstyle{plain}
\theoremstyle{definition}
\theoremstyle{remark}
\newtheorem{theorem}{Theorem}

\newtheorem{lemma}{Lemma}
\newtheorem{remark}{Remark}
\newtheorem{example}{Example}

\title{Second $\alpha$-Order Fractal Differential Equations}
\author{Alireza Khalili Golmankhaneh$^1$, Donatella Bongiorno$^2$,\\
$^1$ Department of Physics, Urmia Branch, Islamic Azad University, Urmia 63896,West Azerbaijan,  Iran\\
alirezakhalili2002@yahoo.co.in\\
$^2$ Department of Engineering, University of Palermo,
Palermo 90100, Italy\\
donatella.bongiorno@unipa.it\\
}

\begin{document}

\maketitle

\let\thefootnote\relax
\footnotetext{ MSC2020:28A80, 33E30, 34A12, 34M03} 
\footnote{Corresponding author Alireza Khalili Golmankhaneh}
\begin{abstract}
In this research paper, we provide a concise overview of fractal calculus applied to fractal sets. We introduce and solve a second $\alpha$-order fractal differential equation with constant coefficients across different scenarios. We propose a uniqueness theorem for second $\alpha$-order fractal linear differential equations. We define the solution space as a vector space with non-integer orders. We establish precise conditions for second $\alpha$-order fractal linear differential equations and derive the corresponding fractal adjoint differential equation.
\end{abstract} 

\section{Introduction}
Fractal patterns, albeit in an array of scales rather than in an infinite manner, have been modeled extensively due to the time and space-related limits concerning practice-wise elements. It is possible that the models might simulate theoretical fractals or natural phenomena with fractal features, and the results derived from modeling processes can be employed as benchmarks for fractal analysis purposes. Fractal calculus, which emerged as a formulation extending ordinary calculus, procures a constructive and algorithmic approach toward the smooth differentiable-structured modeling of natural processes through fractals.\\
Benoit Mandelbrot is credited with pioneering the field of fractal geometry \cite{Mandelbro}, which revolves around shapes possessing fractal dimensions that surpass their topological dimensions \cite{falconer1999techniques,jorgensen2006analysis}. These intricate fractals exhibit self-similarity and frequently demonstrate non-integer and complex dimensions \cite{Qaswet,Lapidus}. Nevertheless, the analysis of fractals presents challenges, given that traditional geometric measures such as Hausdorff measure \cite{rogers1998hausdorff}, length, surface area, and volume are typically applied to standard shapes \cite{Ewqq}. Consequently, the direct application of these measures to fractal analysis becomes intricate \cite{Barnsley,Gregory,rosenberg2020fractal,Tosatti,bishop2017fractals,Shlomo}.

Researchers have addressed the challenge of fractal analysis by employing various methodologies. Among these approaches, harmonic analysis \cite{kigami2001analysis,Strichartz2}, measure theory \cite{giona1995fractal,freiberg2002harmonic,jiang1998some,Bongiorno23,bongiorno2018derivatives,bongiorno2015fundamental,bongiorno2015integral}, fractional space and nonstandard methods \cite{stillinger1977axiomatic}, probabilistic methods \cite{Barlow} fractional calculus \cite{e25071008,uchaikin2013fractional,damian2023mechanical,samayoa2022map,Trifcebook} and non standard methods \cite{nottale2011scale}.

Fractal calculus is a mathematical framework that extends traditional calculus, allowing for the treatment of equations whose solutions take the form of functions exhibiting fractal properties, such as fractal sets and curves \cite{parvate2009calculus, parvate2011calculus}. What makes fractal calculus particularly appealing lies in its elegance and algorithmic methodologies, which contrast favorably with other techniques \cite{Alireza-book}.

The generalization of $F^{\alpha}$-calculus (FC) has been successfully achieved by employing the gauge integral method. This generalization focuses on the integration of functions over a specific subset of the real number line that contains singularities found within fractal sets \cite{golmankhaneh2016fractal,golmankhaneh2023fuzzification}.

Various methods have consequently been employed to solve fractal differential equations, and their stability conditions have been determined accordingly \cite{golmankhaneh2019sumudu,Fourier1,golmankhaneh2023solving}.

The application of FC is demonstrated through the analysis of fractal interpolation functions and Weierstrass functions. These functions often display characteristics of non-differentiability and non-integrability when viewed through the lenses of traditional calculus \cite{gowrisankar2021fractal}.

Fractal calculus has been expanded to encompass the study of Cantor cubes and Cantor tartan \cite{golmankhaneh2018fractalt}, and within this framework, the Laplace equation has been formally defined \cite{khalili2021laplace}.

Analyses based on nonlocal fractal calculus have been conducted for electrical circuits with arbitrary source terms, extending the examination to circuits \cite{banchuin20224noise}.

Numerical simulations explored the influence of parameters on noise performance. Increasing the orders of fractal-fractional reactive components generally improved noise performance across different circuits \cite{Rewid3,banchuin2022noise}.

The utilization of non-local fractal derivatives to characterize fractional Brownian motion on thin Cantor-like sets was demonstrated. The proposal of the fractal Hurst exponent establishes its connection to the order of non-local fractal derivatives \cite{Alireza-book}.
Furthermore, fractal stochastic differential equations have been defined, with categorizations for processes like fractional Brownian motion and diffusion occurring within mediums with fractal structures \cite{Alireza-book,khalili2019fractalcat,khalili2019random}.
Local vector calculus within fractional-dimensional spaces, on fractals, and in fractal continua was developed. The proposition was put forth that within spaces characterized by non-integer dimensions, it was feasible to define two distinct del-operators-each operating on scalar and vector fields. Employing these del-operators, the foundational vector differential operators and Laplacian in fractional-dimensional space were formulated conventionally. Additionally, Laplacian and vector differential operators linked with $F^{\alpha}$-derivatives on fractals were established \cite{balankin2023vector}. The concept of a fractal comb and its associated staircase function was introduced. Derivatives and integrals were defined for functions on these combs using the staircase function \cite{golmankhaneh2023fractal}.
Fractal retarded, neutral, and renewal delay differential equations with constant coefficients were solved through the utilization of the method of steps and the Laplace transform \cite{golmankhaneh2023initial}.
Fractal integral and differential forms were defined through nonstandard analysis \cite{khalili2023non}.
Fractal time, recently suggested by physics researchers due to its self-similar properties and fractional dimension, was investigated in the context of economic models employing both local and non-local fractal Caputo derivatives
\cite{faghih2023introduction,Alireza-book,Welch-5,Vrobel,Shlesinger-6,plonka1995fractal}

Along these lines of developments and implications, we have introduced second $\alpha$-order fractal linear differential equations and elucidated their corresponding solutions.\\
To this end, the current paper is structured as follows: in Section \ref{1g}, we provide a concise review of fractal calculus.
Moving on to Section \ref{2g}, we define and solve the second $\alpha$-order fractal differential equation and present the uniqueness theorem associated with it.
In Section \ref{3g}, we extend the discussion to encompass the uniqueness theorem for second $\alpha$-order fractal linear differential equations.
Section \ref{4g} addresses the comprehensive study of the exact second $\alpha$-order fractal differential equation.
In Section \ref{5g}, we delve into the solution and presentation of the nonhomogeneous second $\alpha$-order fractal differential equation.
Finally, Section \ref{6g} provides the conclusion, discussion, and future.

\section{Overview of Fractal Calculus on Fractal Sets \label{1g}}
In this section, we present a concise overview of fractal calculus applied to fractal sets as summarized in \cite{parvate2009calculus,parvate2011calculus,Alireza-book},  moreover in this section and more generally throughout the paper, we will indicate by $F$ an $\alpha$-perfect fractal subset of real line.

\begin{definition}
The flag function of a set $F$ and a closed interval $I\subset (a,b)$ is defined as:
\begin{equation}
  \rho(F,I)=
  \begin{cases}
    1, & \text{if } F\cap I\neq\emptyset;\\
    0, & \text{otherwise}.
  \end{cases}
\end{equation}
\end{definition}

\begin{definition}
For  a subdivision $P_{[a,b]}$ of $[a,b]$, and for a given $\delta>0$, the coarse-grained mass of $F\cap [a,b]$ is defined by
\begin{equation}
  \gamma_{\delta}^{\alpha}(F,a,b)=\inf_{|P|\leq
\delta}\sum_{i=0}^{n-1}\Gamma(\alpha+1)(t_{i+1}-t_{i})^{\alpha}
\rho(F,[t_{i},t_{i+1}]),
\end{equation}
where $|P|=\max_{0\leq i\leq n-1}(t_{i+1}-t_{i})$, and $0< \alpha\leq1$.
\end{definition}

\begin{definition}
The mass function of  $F$ is defined as the limit of the coarse-grained mass as $\delta$ approaches zero:
\begin{equation}
  \gamma^{\alpha}(F,a,b)=\lim_{\delta\rightarrow0}\gamma_{\delta}^{\alpha}(F,a,b).
\end{equation}
\end{definition}

\begin{definition}
The $\gamma$-dimension of $F\cap [a,b]$ is defined as:
\begin{align}
  \dim_{\gamma}(F\cap
[a,b])&=\inf\{\alpha:\gamma^{\alpha}(F,a,b)=0\}\nonumber\\&
=\sup\{\alpha:\gamma^{\alpha}(F,a,b)=\infty\}
\end{align}
\end{definition}

\begin{definition}
The integral staircase function of order $\alpha$ for  $F$ is given by:
\begin{equation}
 S_{F}^{\alpha}(x)=
 \begin{cases}
   \gamma^{\alpha}(F,a_{0},x), & \text{if } x\geq a_{0}; \\
   - \gamma^{\alpha}(F,x,a_{0}), & \text{otherwise}.
 \end{cases}
\end{equation}
where $a_{0}$ is an arbitrary fixed real number.
\end{definition}

\begin{definition}
Let  $f$ be a real function defined on $F \cap(a,b).$ Let $x$ be a point in $F\cap [a,,b].$ Whenever $$\underset{ y\rightarrow x}{F-\text{lim}}~f(y)\,=\,f(x),$$ we say that the function $f$ is $F$-continuous at $x$.
If $f$ is $F$-continuous at each points of $F \cap(a,b)$ we say that $f$ is $F$-continuous in $(a,b).$
\end{definition}

\begin{remark}
Let us observe that the continuity of a function $f$ in the interval $(a,b)$ implies the $F$-continuity of $f$ in $(a,b),$ however the converse is not true (see \cite{parvate2009calculus,parvate2011calculus,Alireza-book}).
\end{remark}

\begin{definition}
Let $f$ be a real function defined in $(a,b).$ The $F^{\alpha}$-derivative of $f$  is defined as follows:
\begin{equation}
  D_{F,x}^{\alpha}f(x)=
  \begin{cases}
    \underset{ y\rightarrow
x}{F{-}\text{lim}}~\frac{f(y)-f(x)}{S_{F}^{\alpha}(y)-S_{F}^{\alpha}(x)}, & \text{if } x\in F; \\
    0, & \text{otherwise}.
  \end{cases}
\end{equation}
if  $F_{-}\text{lim}$ exists \cite{parvate2009calculus}.  Moreover, if $f$ is $F^{\alpha}$-derivable at each points of the fractal set $F,$ we say that $f$ is $F^{\alpha}$-derivable in $(a,b).$

\end{definition}

\begin{remark}
Let us observe that the derivative of a function $f$ in the interval $(a,b)$ implies the $F$-derivative of $f$ in $(a,b),$ however the converse is not true.
Indeed it is trivial to observe that the $F^{\alpha}$-derivative of the characteristic function of the fractal set $F$ is not derivable in the usual sense at a point of $(a,b)$ since it is not continuous.
\end{remark}

\begin{theorem}
Let $f$ be a real function defined in $(a,b).$  If $f$ is $F^{\alpha}$-derivable in $(a,b),$ then the function $f$ is $F$-continuous in $(a,b).$
\end{theorem}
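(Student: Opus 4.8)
The plan is to transplant the classical ``differentiability implies continuity'' argument into the $F$-limit setting. Since $F$-continuity in $(a,b)$ is defined pointwise on $F\cap(a,b)$, it suffices to fix an arbitrary $x\in F\cap(a,b)$ and establish $F$-continuity there. The starting point is the elementary algebraic identity, valid for $y\in F$ with $S_{F}^{\alpha}(y)\neq S_{F}^{\alpha}(x)$,
\[
f(y)-f(x)=\frac{f(y)-f(x)}{S_{F}^{\alpha}(y)-S_{F}^{\alpha}(x)}\,\bigl(S_{F}^{\alpha}(y)-S_{F}^{\alpha}(x)\bigr).
\]
The whole proof is then a matter of letting $y\to x$ through $F$ and tracking the two factors on the right.

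Next I would pass to the $F$-limit as $y\to x$ on both sides. By hypothesis $f$ is $F^{\alpha}$-derivable at $x$, so the first factor converges to the finite value $D_{F,x}^{\alpha}f(x)$. For the second factor I would invoke the continuity of the integral staircase function $S_{F}^{\alpha}$, a standard property of $F^{\alpha}$-calculus established in \cite{parvate2009calculus,parvate2011calculus,Alireza-book}, which gives $S_{F}^{\alpha}(y)-S_{F}^{\alpha}(x)\to 0$. Using the product rule for $F$-limits (the $F$-limit of a product is the product of the $F$-limits whenever both exist), the right-hand side tends to $D_{F,x}^{\alpha}f(x)\cdot 0=0$. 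Hence $\underset{y\rightarrow x}{F-\text{lim}}\,(f(y)-f(x))=0$, equivalently $\underset{y\rightarrow x}{F-\text{lim}}\,f(y)=f(x)$, which is exactly $F$-continuity of $f$ at $x$; as $x$ was arbitrary in $F\cap(a,b)$, $F$-continuity in $(a,b)$ follows.

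The step I expect to require the most care is the legitimacy of the decomposition itself, since it is only meaningful at those $y\in F$ for which $S_{F}^{\alpha}(y)\neq S_{F}^{\alpha}(x)$. The hard part will therefore be to confirm that, for $x$ in the $\alpha$-perfect set $F$, the staircase function increases strictly through $x$, so that the denominator is nonzero for all $y\in F$ sufficiently close to and distinct from $x$. This regularity is precisely what the $\alpha$-perfectness hypothesis supplies, and it is in fact already implicit in the well-posedness of the difference quotient defining $D_{F,x}^{\alpha}$. Once this point is secured, the product-rule manipulation is routine and no further estimates are needed.
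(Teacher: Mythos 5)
Your argument is correct, and there is essentially nothing in the paper to compare it against: the theorem appears in the review Section 2 without proof, stated as a known result imported from \cite{parvate2009calculus,parvate2011calculus,Alireza-book}. The argument in those sources is the one you give---factor $f(y)-f(x)$ into the difference quotient times $S_{F}^{\alpha}(y)-S_{F}^{\alpha}(x)$, then use continuity of the staircase function and the algebra of $F$-limits---and your closing point is the right one to flag, since $\alpha$-perfectness is precisely what prevents points $y\in F$ with $S_{F}^{\alpha}(y)=S_{F}^{\alpha}(x)$ from accumulating at $x$, so the factorization is legitimate at every $y\in F$ in a small punctured neighborhood of $x$ and the $F$-limit computation goes through.
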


\begin{definition}
Let  $f$ be a real function defined in $(a,b)$ and let $x$ be a point in $F \cap(a,b)$.  If there exists the $F^{\alpha}$-derivative of $f$ at the point $x,$ the second $F^{\alpha}$-derivative of $f$ at the same point $x$ is defined as follows:
\begin{equation}
  (D_{F,x}^{\alpha})^2f(x)=
  \begin{cases}
    \underset{ y\rightarrow
x}{F{-}\text{lim}}~\frac{D_{F,y}^{\alpha}f(y)-D_{F,x}^{\alpha}f(x)}{S_{F}^{\alpha}(y)-S_{F}^{\alpha}(x)}, & \text{if } x\in F; \\
    0, & \text{otherwise}.
  \end{cases}
\end{equation}
if $F{-}\text{lim}$ exists.
\end{definition}

\begin{remark}
The classical second derivative of a function $f$  at a point $x\in F \cap(a,b)$  implies the second $F^{\alpha}$-derivative of $f$ at the same point $x.$
 However the converse it is not true.  Indeed it is enough to consider the integral staircase function of order $\alpha$ for  $F$, since $D_{F,x}^{\alpha}S^{\alpha}_F(x)=\chi_F(x),$ and
 $\chi_F(x),$ is not derivable in the usual sense at a point $x\in F \cap(a,b)$  since it is not continuous at the same point.
 \end{remark}

\begin{definition}
Let $n\geq 2$, and let $f$ be a real function defined in $(a,b).$ Let $x$ be a point in $F \cap(a,b)$. If there exists the $(n-1)^{th}\,F^{\alpha}$-derivative of $f$ at the point $x,$ the $n^{th}\,F^{\alpha}$-derivative of $f$ at the same point $x$ is defined as follows:
\begin{equation}
 (D_{F,x}^{\alpha})^nf(x)=
  \begin{cases}
    \underset{ y\rightarrow
x}{F{-}\text{lim}}~\frac{(D_{F,y}^{\alpha})^{(n-1)}f(y)-(D_{F,x}^{\alpha})^{(n-1)}f(x)}{S_{F}^{\alpha}(y)-S_{F}^{\alpha}(x)}, & \text{if } x\in F; \\
    0, & \text{otherwise}.
  \end{cases}
\end{equation}
if  $F{-}\text{lim}$ exists.

A function $f$ that has the $n^{th}\,F^{\alpha}$-derivative at the point $x$  is said to be $n$-times $F^{\alpha}$ differentiable at $x$.
\end{definition}

\begin{remark}
 The  $n^{th}\,F^{\alpha}$-derivative of $f$ at the point $x$ will be denoted also by: $D_{F,x}^{n\alpha}f(x).$
Therefore, from now on:
$$(D_{F,x}^{\alpha})^nf(x)=D_{F,x}^{n\alpha}f(x),~~~\forall x\in F.$$
\end{remark}

\begin{definition}
If $f$ is $n$-times $F^{\alpha}$ differentiable at each point $x$ of the fractal set $F \cap(a,b)$ we say that $f$ is $n$-times $F^{\alpha}$- differentiable in $(a,b)$.
The collection of all functions $f\colon (a,b)\to \mathbb{R}$ that are
$n$-times $F^{\alpha}$-differentiable in $(a,b)$ will be denoted by $C^{n,\alpha}(a,b)$
\end{definition}

\begin{definition}
Let $I=[a,b]$ such that $S^{\alpha}_F$ is finite on $I$. Let $f$ be a bounded function defined on $(a,b)$ and let $x\in F \cap I$. The $F^{\alpha}$-integral of $f$ on $I$ is defined as:
\begin{align}
  \int_{a}^{b}f(x)d_{F}^{\alpha}x&=\sup_{P_{[a,b]}}
\sum_{i=0}^{n-1}\inf_{x\in F\cap
I}f(x)(S_{F}^{\alpha}(x_{i+1})-S_{F}^{\alpha}(x_{i}))
\nonumber\\&=\inf_{P_{[a,b]}}
\sum_{i=0}^{n-1}\sup_{x\in F\cap
I}f(x)(S_{F}^{\alpha}(x_{i+1})-S_{F}^{\alpha}(x_{i})).
\end{align}

\end{definition}

\section{ Preliminary on the $\alpha$-order fractal differential equation \label{2g}}

A relation of the form:
\begin{equation}\label{retytte}
  G(x,f(x),D_{F,x}^{\alpha}f, D_{F,x}^{2\alpha}f(x),...D_{F,x}^{n\alpha}f(x))=0,
\end{equation}
where $f(x)$ is the unknown function and $G$ is an assigned function of the $n+2$ variables: $x, f(x), D^{\alpha}_{F,x} f(x), D^{2\alpha}_{F,x} f(x), \cdots, D^{n\alpha}_{F,x}f(x)$,  is called  $n^{th}\,\alpha$-order  ordinary fractal differential equation; here $n^{th}$ indicate the maximum order of the $F^{\alpha}$-derivative  that appears in the equation while,  as usual,  $\alpha$ denotes the dimension of the fractal subset of the real line, $0<\alpha\leq 1.$
  If $G$ is a first-degree polynomial of the variables $f(x),D_{F,x}^{\alpha}f(x),D_{F,x}^{2\alpha}f(x),...D_{F,x}^{n\alpha}f(x),$ the equation is said to be linear and its general form is:
\begin{equation}\label{iortppk}
  a_{0}(x)D_{F,x}^{n\alpha}f(x)+ a_{1}(x)D_{F,x}^{n(\alpha-1)}f(x)+...+
  a_{n}(x)f(x)=g(x),
\end{equation}
where the coefficients $a_i(x),$ for $i=0,1,\cdots,n$ are $F$-continuous functions in $(a,b)$ as well as the function $g(x).$ Whenever $g(x)=0,$ the equation \eqref{iortppk} is said to be homogeneous.
An equation that is not of the form of the equation \eqref{iortppk}  is said a nonlinear  $n^{th}\,\alpha$-order fractal differential equation.

\begin{example}
The following third $\alpha$-order  fractal differential equation
\begin{equation}\label{iop}
  D_{F,x}^{3\alpha}f(x)+2\exp(S_{F}^{\alpha}(x))D_{F,x}^{2\alpha}f(x)+3
  D_{F,x}^{\alpha}f(x)=S_{F}^{\alpha}(x)^{4}
\end{equation}
is linear.
\end{example}

\begin{example}
The following second $\alpha$-order  fractal differential equation represents the motion of an oscillating pendulum with fractal time
\begin{equation}\label{re4ee}
  D_{F, t}^{2\alpha}\theta(t)+\frac{g}{L}\sin(\theta(t))=0,
\end{equation}
where $\theta(t)$ is the unknown function that physically means an oscillating pendulum of length $L$  with the vertical direction, is nonlinear because of the term $\sin(\theta(t))$.
\end{example}

\begin{remark}

Now let us observe that,  if in the equation \eqref{retytte} it is possible to explicit the $n^{th}\, F^{\alpha}$ derivative, therefore the $n^{th}\ \alpha$-order fractal differential equation is said to be written in normal form and we have:

\begin{equation}\label{rtteiopl}
  D_{F,x}^{n\alpha} f(x)=g(x,f(x),D_{F,x}^{\alpha} f(x),...,D_{F,x}^{(n-1)\alpha} f(x)),~~~\forall x\in F.
\end{equation}

where $g$ is an assigned function of the $n+1$ variables:

$$x, f(x), D^{\alpha}_{F,x} f(x), D^{2\alpha}_{F,x} f(x), \cdots, D^{(n-1)\alpha}_{F,x}f(x)$$

\end{remark}

\begin{definition}

A solution of the $n^{th}\ \alpha$-order fractal differential equation is a function $\psi:(a,b)\rightarrow \mathbb{R}$ such that $\psi\in C^{n,\alpha}(a,b)$ and such that:

$G(x, \psi (x), D_{F,x}^{\alpha}\psi,(x) (D_{F,x}^{2\alpha})\psi,(x) \cdots (D_{F,x}^{n\alpha})\psi(x))=0, \ \ \ \  \forall x\in F$.
\end{definition}

\begin{remark}
It is trivial to observe that if \eqref{iortppk} is written in a normal form, i.e. in the form of the equation \eqref{rtteiopl}, a solution of \eqref{rtteiopl}  is a function $\psi:(a,b) \rightarrow \mathbb{R}$ such that $\psi\in C^{n,\alpha}(a,b)$ and such that:
\begin{equation}
  D_{F,x}^{n\alpha} \psi(x)=g(x,\psi(x),D_{F,x}^{\alpha} \psi(x),...,D_{F,x}^{(n-1)\alpha} \psi(x)),~~~\forall x\in F.
\end{equation}

\end{remark}

We can conjecture that the set of all solutions of a $n^{th}\ \alpha$-order fractal differential equation is represented by a family of functions depending on $n$ parameters.

For simplicity,  from now on we denote by $D_{x}^{\alpha} f(x)$,  the $F^{\alpha}$-derivative of $f$ at the point $x\in F$  and we develop our theory only in the linear case and only in the case $n=2.$

\subsection{Second $\alpha$-order fractal differential equation with constant coefficient}

Let
\begin{equation}\label{iopnytcr}
aD_{x}^{2\alpha}f(x)+bD_{x}^{\alpha}f(x)+cf(x)=0.
\end{equation}
a homogeneous second $\alpha$-order fractal differential equation with constant coefficients.
We are interested in the problem of finding all solutions of Eq.\eqref{iopnytcr}.  To do that let us note that
the linearity of the differential equation means that if $\psi_1(x)$ and $\psi_2(x)$ are any two solutions of Eq.\eqref{iopnytcr} and $c_1$ and $c_2$ are any two constants, then the function $\psi(x)=c_1\psi_1(x)\,+\,c_2\psi_2(x)$ is also a solution of Eq. \eqref{iopnytcr}. Moreover, let us recall that if $\frac{\psi_1(x)}{\psi_2(x)}\neq k,$ where $k$ is a generic constant,  $\forall x\in (a,b)$ therefore the functions $\psi_1(x)$ and $\psi_2(x)$ are linearly independent on $(a,b).$ By a similarity with the first $\alpha$-order fractal differential equation (see \cite{golmankhaneh2023solving}),  we assume that the function $\psi_1(x)=\exp(rS^{\alpha}_F(x))$ is a solution of Eq.\eqref{iopnytcr}. Therefore,  by substituting $\psi_1(x)$ into Eq.\eqref{iopnytcr} we have:

\begin{equation}\label{1iop}
(ar^{2}+br+c)\exp(rS_{F}^{\alpha}(x))=0.
\end{equation}
Since $\exp(rS_{F}^{\alpha}(x))$ is never zero, we can conclude:
\begin{equation}\label{iopm}
(ar^{2}+br+c)=0.
\end{equation}
This equation is known as the characteristic equation for Eq.\eqref{iopnytcr}. We encounter three main cases:

1. \textbf{Real Roots Case} ($b^2-4ac>0$): In this scenario, let's assume that $r_{1}$ and $r_{2}$ are the two real distinct root solutions of Eq.\eqref{iopm}.  Therefore the functions $\psi_1(x)=\exp (r_{1}S_{F}^{\alpha}(x))$ and $\psi_2(x)=\exp (r_{2}S_{F}^{\alpha}(x))$ are two linear indipendent solutions of Eq. \eqref{iopnytcr}.
By the conjecture we did we get that the general solution of Eq. \eqref{iopnytcr} is given by:

\begin{equation}\label{zaqqq}
\psi (x)=c_{1}\psi_{1}(x)+c_{2}\psi_{2}(x)=c_{1}\exp(r_{1}S_{F}^{\alpha}(x))
+c_{2}\exp(r_{1}S_{F}^{\alpha}(x)),
\end{equation}

2. \textbf{Complex Roots Case} ($b^2-4ac<0$): In this case,  let us assume that $r_{1}=\lambda+i\nu$ and $r_{2}=\lambda-i\nu$, where $\lambda$ and $\nu$ are real numbers, are the two real distinct root solutions of Eq.\eqref{iopm}. Therefore  the functions $$\psi_1(x)=\exp ((\lambda+i\nu)S_{F}^{\alpha}(x))=\exp (\lambda S^{\alpha}_F(x))\left (\cos\nu S^{\alpha}_F(x)+i\ \sin\nu S^{\alpha}_F(x)\right) $$
and
$$\psi_2(x)=\exp ((\lambda-i\nu)S_{F}^{\alpha}(x))=\exp (\lambda S^{\alpha}_F(x))\left (\cos\nu S^{\alpha}_F(x)-i\ \sin\nu S^{\alpha}_F(x)\right )$$ are two linear independent solutions of Eq. \eqref{iopnytcr}.
 Since we are considering only real functions, by the elementary property of the complex numbers and by the conjecture we did about the set of all solutions of the second $\alpha$- order fractal differential equation  we get that the functions:

\begin{equation}\label{er5ds}
\frac{1}{2}(\psi_{1}(x)+\psi_{2}(x))=\exp (\lambda S^{\alpha}_F(x)\cos(\nu  S^{\alpha}_F(x))
\end{equation}

and

\begin{equation}\label{erds}
\frac{1}{2i}(\psi_{1}(x)+\psi_{2}(x))=\exp (\lambda S^{\alpha}_F(x))\sin(\nu S^{\alpha}_F(x))
\end{equation}

are still solutions of Eq.\eqref{iopnytcr}. Therefore,
 we can represent the general solution of Eq.\eqref{iopnytcr}, case: $b^2-4ac<0$,  in terms of trigonometric functions:

\begin{equation}\label{reee}
\psi (x)=c_{1}\exp(\lambda S_{F}^{\alpha}(x))\cos(\nu S_{F}^{\alpha}(x))+
c_{2}\exp(\lambda S_{F}^{\alpha}(x))\sin(\nu S_{F}^{\alpha}(x))
\end{equation}

3. \textbf{Repeated Roots Case} ($b^2-4ac=0$): In this situation, we have $r_{1}=r_{2}=-b/2a$.  Therefore,  the general solution for Eq.\eqref{iopnytcr} takes the form:

\begin{equation}\label{rtt}
\psi(x)=c_{1}\exp\left(-\frac{b S_{F}^{\alpha}(x)}{2a}\right)+
c_{2}S_{F}^{\alpha}(x)\exp\left(-\frac{b S_{F}^{\alpha}(x)}{2a}\right)
\end{equation}

Indeed, let us observe that if $v(x)$ is a real function on $(a,b)$ that is also two-times $F^{\alpha}$-differentiable at $x\in F$, indicated by $\psi_2(x)=v(x)\exp(-b S_{F}^{\alpha}(x)/2a)$,  it follows that the functions
 $\psi_1(x)=\exp(-b S_{F}^{\alpha}(x)/2a)$ and the function $\psi_2(x)$ are linearly independent, since $\frac{\psi_2(x)}{\psi_1(x)}\,=\,v(x).$ So replacing $\psi_2(x)$ in the Eq.\eqref{iopnytcr} we have:

\begin{equation}\label{retaq}
D_{x}^{2\alpha}v(x)=0,
\end{equation}

and solving this equation for $v(x)$ we yields:

\begin{equation}\label{e55rsw}
v(x)=c_{1}+c_{2}S_{F}^{\alpha}(x)
\end{equation}

This completes the solutions for the second $\alpha$-order fractal differential equation with constant coefficients under various cases.

\begin{example}
Consider the second $\alpha$-order fractal differential equation:
\begin{equation}\label{oplmhn}
  D_{x}^{2\alpha}f(x)+D_{x}^{\alpha}f(x)+f(x)=0
\end{equation}
Its characteristic equation is given by:
\begin{equation}\label{io4758}
   r^2+r+1=0
\end{equation}
The roots of this characteristic equation are:
\begin{equation}\label{iopllk}
   r_1=-\frac{1}{2}+ i\frac{\sqrt{3}}{2}~  \textmd{and} ~ r_2=-\frac{1}{2}- i\frac{\sqrt{3}}{2}
\end{equation}
Therefore, the general solution of Eq.\eqref{oplmhn} is:
\begin{align}\label{iomnyhg}
   \psi(x)&=c_{1}\exp\left(-\frac{S_{F}^{\alpha}(x)}{2}\right)
   \cos\left(\frac{\sqrt{3}S_{F}^{\alpha}(x)}{2}\right)+
   c_{2}\exp\left(-\frac{S_{F}^{\alpha}(x)}{2}\right)
   \sin\left(\frac{\sqrt{3}S_{F}^{\alpha}(x)}{2}\right)\nonumber\\&
   \propto c_{1}\exp\left(-\frac{x^{\alpha}}{2}\right)
   \cos\left(\frac{\sqrt{3}x^{\alpha}}{2}\right)+
   c_{2}\exp\left(-\frac{x^{\alpha}}{2}\right)
   \sin\left(\frac{\sqrt{3}x^{\alpha}}{2}\right)
\end{align}
\begin{figure}[H]
  \centering
  \includegraphics[scale=0.5]{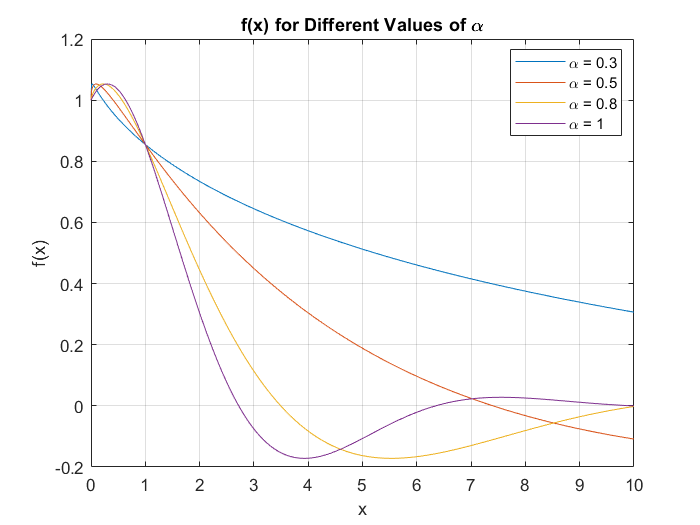}
  \caption{Graph of Eq.\eqref{iomnyhg} for $c_{1}=1,~c_{2}=1$.}\label{aqqaa}
\end{figure}
In Figure \ref{aqqaa}, we have illustrated Eq.\eqref{iomnyhg} for various values of $\alpha$, highlighting how the fractal support influences the solutions.

\end{example}

\section{Uniqueness theorem for second $\alpha$-order fractal linear differential equations \label{3g}}
In this section, we will prove the existence and the uniqueness of the solution of a given second $\alpha$-order fractal differential equation with some fixed initial conditions
\begin{theorem}
Consider the following second $\alpha$-order fractal differential equation:
\begin{equation}\label{dds}
D_{x}^{2\alpha}f(x)+a_{1}D_{x}^{\alpha}f(x)+a_{2}f(x)=0
\end{equation}
with the initial conditions
\begin{equation}\label{ijnplm}
f(x_{0})=f_{0},~~~D_{x}^{\alpha}f(x)|_{x=x_{0}}=D_{x}^{\alpha}f_{0}.
\end{equation}
Where,  $x_{0}\in (a,b)\cap F$,  and $f_{0}$, $D_{x}^{\alpha}f_{0}$ are two predefined constants.\\
Let $\psi_{1}(x)$ and $\psi_{2}(x)$ be the two linear independent solutions of Eq.\eqref{dds} (as already founded in the previous section),
therefore there exist two constants $c_1$ and $c_2$ such that $\psi(x)=c_{1}\psi_{1}(x)+c_{2}\psi_{2}(x)$ is a solution of the assigned second $\alpha$-order fractal differential equation given by Eq.\eqref{dds} with the initial conditions given by Eq.\eqref{ijnplm}
\end{theorem}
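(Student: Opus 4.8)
The plan is to reduce the statement to the unique solvability of a $2\times 2$ linear algebraic system for $c_1$ and $c_2$, controlled by a fractal analogue of the Wronskian. Since Eq.~\eqref{dds} is linear and homogeneous, the superposition principle recalled in the previous section already guarantees that $\psi(x)=c_1\psi_1(x)+c_2\psi_2(x)$ solves Eq.~\eqref{dds} for every choice of constants; hence the only thing left is to fit the two initial data. Imposing $\psi(x_0)=f_0$ and $D_x^\alpha\psi(x)|_{x=x_0}=D_x^\alpha f_0$ produces
\begin{equation}
\begin{cases}
c_1\psi_1(x_0)+c_2\psi_2(x_0)=f_0,\\
c_1 D_x^\alpha\psi_1(x_0)+c_2 D_x^\alpha\psi_2(x_0)=D_x^\alpha f_0,
\end{cases}
\end{equation}
whose coefficient determinant is the fractal Wronskian
\begin{equation}
W(x_0)=\psi_1(x_0)\,D_x^\alpha\psi_2(x_0)-\psi_2(x_0)\,D_x^\alpha\psi_1(x_0).
\end{equation}
By Cramer's rule, this system has a unique solution $(c_1,c_2)$ exactly when $W(x_0)\neq 0$, so the whole theorem rests on showing that $W$ does not vanish at $x_0$.

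Next I would establish a fractal version of Abel's identity, which yields a clean dichotomy. Applying $D_x^\alpha$ to $W(x)$ and using the Leibniz rule for the $F^\alpha$-derivative, the cross terms $D_x^\alpha\psi_1\,D_x^\alpha\psi_2$ cancel, leaving
\begin{equation}
D_x^\alpha W(x)=\psi_1(x)\,D_x^{2\alpha}\psi_2(x)-\psi_2(x)\,D_x^{2\alpha}\psi_1(x).
\end{equation}
Substituting $D_x^{2\alpha}\psi_i=-a_1 D_x^\alpha\psi_i-a_2\psi_i$ from Eq.~\eqref{dds}, the $a_2$-contributions cancel and the $a_1$-contributions reassemble into $-a_1 W(x)$, giving the first $\alpha$-order equation $D_x^\alpha W(x)=-a_1 W(x)$. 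By the solution method for first $\alpha$-order equations, this integrates to $W(x)=W(x_0)\exp\!\bigl(-a_1\,(S_F^\alpha(x)-S_F^\alpha(x_0))\bigr)$, and since the fractal exponential never vanishes, $W$ is either identically zero or nowhere zero on $F\cap(a,b)$.

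Finally I would eliminate the degenerate branch using the explicit pairs $\psi_1,\psi_2$ constructed in the three cases of the preceding subsection, which avoids any circular appeal to a uniqueness theorem. In the real-root case $\psi_i=\exp(r_i S_F^\alpha(x))$, so $D_x^\alpha\psi_i=r_i\exp(r_i S_F^\alpha(x))$ and $W(x_0)=(r_2-r_1)\exp\bigl((r_1+r_2)S_F^\alpha(x_0)\bigr)\neq 0$ because the roots are distinct; the complex-root and repeated-root pairs give the analogous nonzero values $\nu\exp(2\lambda S_F^\alpha(x_0))$ and $\exp(-bS_F^\alpha(x_0)/a)$, respectively. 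With $W(x_0)\neq 0$ confirmed, Cramer's rule supplies the required $c_1,c_2$, and uniqueness of the solution to the initial value problem follows from the uniqueness of the solution of the linear system. The main obstacle is the second step: one must verify that the Leibniz rule for $D_x^\alpha$ and the integration of $D_x^\alpha W=-a_1 W$ are valid within $F^\alpha$-calculus, so that the Abel-type dichotomy genuinely holds on the fractal support $F\cap(a,b)$ rather than merely on a full interval.
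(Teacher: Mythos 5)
Your proposal is correct, and its skeleton coincides with the paper's proof: impose the two initial conditions to get a $2\times 2$ linear system for $c_1,c_2$, observe that solvability is governed by the fractal Wronskian $W[\psi_1,\psi_2](x_0)$, and conclude by Cramer's rule. Where you diverge is in how the nonvanishing of $W(x_0)$ is justified, and here your route is genuinely stronger. The paper simply asserts that linear independence of $\psi_1,\psi_2$ forces $W(x_0)\neq 0$, citing Peano and Apostol; taken literally for arbitrary functions this implication is false (Peano's classical counterexample, e.g.\ $x^2$ and $x|x|$, is linearly independent with identically vanishing Wronskian), and it only becomes true for functions that are \emph{solutions} of the same second-order linear equation. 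Your proof supplies exactly the missing ingredient in two independent ways: first, a fractal Abel identity, $D_x^{\alpha}W=-a_1W$, hence $W(x)=W(x_0)\exp\bigl(-a_1(S_F^{\alpha}(x)-S_F^{\alpha}(x_0))\bigr)$, giving the dichotomy that $W$ is either identically zero or nowhere zero on $F\cap(a,b)$; second, the explicit evaluations $W=(r_2-r_1)\exp\bigl((r_1+r_2)S_F^{\alpha}(x_0)\bigr)$, $\nu\exp(2\lambda S_F^{\alpha}(x_0))$, and $\exp(-bS_F^{\alpha}(x_0)/a)$ in the three cases of the preceding subsection, all manifestly nonzero (and your computations check out against the product rule with $D_x^{\alpha}S_F^{\alpha}(x)=\chi_F(x)$). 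What the paper's shortcut buys is brevity; what your argument buys is an actual proof within $F^{\alpha}$-calculus, at the cost of the two prerequisites you correctly flag — the Leibniz rule for $D_x^{\alpha}$ and the integration of the first $\alpha$-order equation $D_x^{\alpha}W=-a_1W$ — both of which are available in the framework the paper already cites (Parvate--Gangal for the algebra of $F^{\alpha}$-derivatives, and the first-order solution theory of \cite{golmankhaneh2023solving}), so the gap you identify as the "main obstacle" is closable with the paper's own references.
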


\begin{proof}
So that $\psi(x)=c_{1}\psi_{1}(x)+c_{2}\psi_{2}(x)$ is a solution of the  Eq.\eqref{dds} with the initial conditions given by  Eq.\eqref{ijnplm},
the constants $c_1$ and $c_2$ should be the solutions of the following linear system:
\begin{align}\label{tijug}
c_{1}\psi_{1}(x_{0})+c_{2}\psi_{2}(x_{0})&=f_{0}\nonumber\\
c_{1}D_{x}^{\alpha}\psi_{1}(x_{0})+c_{2}
D_{x}^{\alpha}\psi_{2}(x_{0})&=D_{x}^{\alpha}f_{0}
\end{align}
It is well known that the system \eqref{tijug} admits one and only one solution if the determinant, nominated Wronskian
\begin{align}\label{olpmnbvcxz}
W[\psi_{1},\psi_{2}]=\begin{vmatrix}
\psi_{1}(x_{0}) & \psi_{2}(x_{0}) \\
D_{x}^{\alpha}\psi_{1}(x_{0})& D_{x}^{\alpha}\psi_{2}(x_{0})
\end{vmatrix}
\end{align}
is not zero. Since the functions $\psi_{1}(x)$ and $\psi_{2}(x)$ are linearly independent,  the Wronskian condition (determinant of Wronskian different from zero), is satisfied in either case (see \cite{peano1889determinant,Apostol1974}). Therefore,  $c_{1},~c_{2}$ are the unique constants satisfying Eq.\eqref{dds} with  the initial conditions given by Eq.\eqref{ijnplm}. This shows that there is a unique linear combination of $\psi_{1}(x)$ and $\psi_{2}(x)$ which is a solution of the assigned second $\alpha$-order fractal differential equation.
\end{proof}
Now, in order to prove the uniqueness theorem we need to prove the following technical Lemma
\begin{lemma}
Let $\psi(x)$ be any solution on  $(a,b)\cap F$ of the following fractal differential equation:
\begin{equation}\label{yuttwse}
D_{x}^{2\alpha}f+a_{1}D_{x}^{\alpha}f+a_{2}f=0,
\end{equation}

 Let $x_{0}\in (a,b)\cap F.$ Then for all $x\in (a,b)\cap F$ we have
\begin{align}\label{ikmnxzsd}
    \|\psi(x_{0})\|\exp(-k|S_{F}^{\alpha}(x)&-S_{F}^{\alpha}(x_{0})|)
    \leq \|\psi(x)\|\leq \nonumber\\& \|\psi(x_{0})\|\exp(k|S_{F}^{\alpha}(x)-S_{F}^{\alpha}(x_{0})|)
  \end{align}
where
\begin{equation}\label{iuuzas}
    \|\psi(x)\|=[|\psi(x)|^2+|D_{x}^{\alpha}\psi(x)|^2]^{1/2},~~~
    k=1+|a_{1}|+|a_{2}|.
  \end{equation}
\end{lemma}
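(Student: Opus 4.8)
The plan is to reduce the stated two-sided bound to a scalar Gronwall-type differential inequality for the squared norm. Set $u(x)=\|\psi(x)\|^{2}=|\psi(x)|^{2}+|D_x^{\alpha}\psi(x)|^{2}$; since $\psi$ is a solution of \eqref{yuttwse} it is twice $F^{\alpha}$-differentiable, so $u$ is $F^{\alpha}$-differentiable on $(a,b)\cap F$. Applying the Leibniz rule for the $F^{\alpha}$-derivative (so that $D_x^{\alpha}(\psi^{2})=2\psi\,D_x^{\alpha}\psi$ and $D_x^{\alpha}((D_x^{\alpha}\psi)^{2})=2\,D_x^{\alpha}\psi\,D_x^{2\alpha}\psi$) and then eliminating $D_x^{2\alpha}\psi$ through the equation $D_x^{2\alpha}\psi=-a_{1}D_x^{\alpha}\psi-a_{2}\psi$, I obtain
\[
D_x^{\alpha}u=2\psi\,D_x^{\alpha}\psi-2a_{1}\,(D_x^{\alpha}\psi)^{2}-2a_{2}\,\psi\,D_x^{\alpha}\psi .
\]

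First I would bound $|D_x^{\alpha}u|$. Using the elementary inequality $2|pq|\le p^{2}+q^{2}$ on the first and third terms together with the estimates $|\psi|^{2}\le u$ and $|D_x^{\alpha}\psi|^{2}\le u$, each summand is controlled by a multiple of $u$: the first term by $u$, the $a_{2}$-term by $|a_{2}|u$, and the $a_{1}$-term by $2|a_{1}|u$. Hence $|D_x^{\alpha}u|\le(1+2|a_{1}|+|a_{2}|)u\le 2ku$ with $k=1+|a_{1}|+|a_{2}|$ as in \eqref{iuuzas}. The slack absorbed into the factor $2$ is exactly what later yields the single $k$ (rather than $2k$) in the norm estimate after taking square roots.

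Next I would integrate the two-sided inequality $-2ku\le D_x^{\alpha}u\le 2ku$. Writing $S=S_F^{\alpha}(x)$, $S_{0}=S_F^{\alpha}(x_{0})$, consider for $S\ge S_{0}$ the auxiliary function $w(x)=u(x)\exp(-2kS_F^{\alpha}(x))$. By the Leibniz rule and $D_x^{\alpha}S_F^{\alpha}(x)=\chi_F(x)$ one gets $D_x^{\alpha}w=(D_x^{\alpha}u-2ku)\exp(-2kS_F^{\alpha}(x))\le 0$, so $w$ is non-increasing along $F$; this gives $u(x)\le u(x_{0})\exp(2k(S-S_{0}))$. The lower bound follows from the companion function $u(x)\exp(2kS_F^{\alpha}(x))$, and the case $S<S_{0}$ is symmetric, yielding $u(x_{0})\exp(-2k|S-S_{0}|)\le u(x)\le u(x_{0})\exp(2k|S-S_{0}|)$. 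Taking square roots and recalling $\|\psi\|=u^{1/2}$ produces exactly \eqref{ikmnxzsd}.

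The main obstacle is not the algebra but the justification of the fractal-calculus tools invoked in the integration: the $F^{\alpha}$-Leibniz rule, the identity $D_x^{\alpha}S_F^{\alpha}=\chi_F$, and especially the monotonicity principle that $D_x^{\alpha}w\le 0$ on $F$ forces $w$ to be non-increasing along $F$ (through the fractal mean-value/fundamental theorem). A clean way to sidestep these is to pass to the conjugate variable: since $S_F^{\alpha}$ acts as the effective independent variable, one writes $u(x)=U(S_F^{\alpha}(x))$ and reduces the inequality to the ordinary differential inequality $|U'(S)|\le 2kU(S)$, to which the classical argument applies directly. One should also dispose of the degenerate case $u(x_{0})=0$ separately (there $\psi\equiv 0$ by the uniqueness already established, and \eqref{ikmnxzsd} is trivial), so that any division by $u$ or passage to $\ln u$ is legitimate.
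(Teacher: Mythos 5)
Your proposal is correct and takes essentially the same route as the paper's own proof: set $v=\|\psi\|^{2}$, derive the differential inequality $|D_{x}^{\alpha}v|\le 2kv$ from the equation together with $2|pq|\le p^{2}+q^{2}$, multiply by the integrating factor $\exp(-2kS_{F}^{\alpha}(x))$, integrate on each side of $x_{0}$, and take square roots (here you are in fact more careful than the paper, whose final displays keep $2k$ in the exponent instead of halving it to the stated $k$ when passing from $v$ to $\|\psi\|$). The only blemish is your parenthetical handling of $u(x_{0})=0$ via ``the uniqueness already established,'' which would be circular since the paper proves uniqueness \emph{from} this lemma; but your integrating-factor argument never divides by $u$ or takes $\ln u$, so that aside can simply be deleted.
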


\begin{proof}
  Let $v(x)=\|\psi(x)\|^2$. This implies that
  \begin{align}\label{ijjjjn}
    v(x)&=\left(\psi(x)\right)^2+\left (D_{x}^{\alpha}\psi(x)\right )^{2}
    \end{align}

  Subsequently,
  \begin{align}\label{i95jjjjn}
 D_x^{\alpha}v(x)&=2\psi(x)D_{x}^{\alpha}\psi(x)+2D_{x}^{\alpha}\psi(x)D_{x}^{2\alpha}\psi(x)
  \end{align}
  and

  \begin{equation}\label{olmnvcx}
    \left | D_{x}^{\alpha}v(x)     \right |\leq 2|\psi(x)|\left| D_{x}^{\alpha}\psi(x)  \right |+2\left |  D_{x}^{\alpha}\psi(x)  \right |\ \left | D_{x}^{2\alpha}\psi(x)     \right |.
  \end{equation}
  By the hypothesis on $\psi(x)$ we have:
  \begin{equation}\label{olkmjuyh}
    D_{x}^{2\alpha}\psi(x)=-a_{1}\, D_{x}^{\alpha}\psi(x)\ -\ a_{2}\psi(x),
  \end{equation}
  and, consequently,
  \begin{equation}\label{olmnvzsw}
    \left | D_{x}^{2\alpha}\psi(x) \right|\leq |a_{1}|\left|D_{x}^{\alpha}\psi(x)\right|\ +\ |a_{2}||\psi(x)|.
  \end{equation}
  Now, substring  \eqref{olmnvzsw} into \eqref{olmnvcx} we obtain
  \begin{equation}\label{ilok}
    \left |D_{x}^{\alpha}v(x)\right |\leq 2(1+|a_{2}|)|\psi(x)|\left |D_{x}^{\alpha}\psi(x)\right |+2|a_{1}|\left |D_{x}^{\alpha}\psi(x)   \right |^2.
  \end{equation}
  and by the inequality
  \begin{equation}\label{xxxxz}
    2|b||c|\leq |b|^2+|c|^2,
  \end{equation}
  setting $b=\psi(x)$ and $c=\, D_{x}^{\alpha}\psi(x)$, we get
  \begin{align}\label{lllll}
    \left |D_{x}^{\alpha}v(x)\right |&\leq (1+|a_{2}|)|\psi(x)|^{2}+(1+2|a_{1}|+|a_{2}|)\left | D_{x}^{\alpha}\psi(x)\right |\nonumber\\
    &\leq 2(1+|a_{1}|+|a_{2}|)[|\psi(x)|^2+|\psi^{\alpha}(x)|^2],
  \end{align}
  therefore
  \begin{equation}\label{ookkmn}
    \left | D_{x}^{\alpha}v(x)  \right |\leq 2kv(x).
  \end{equation}
 so we get
  \begin{equation}\label{ppppllk}
    -2kv(x)\leq v^{\alpha}(x)\leq 2kv(x)
  \end{equation}
  Now to prove  Eq.\eqref{ikmnxzsd}, let us observe that the right inequality of Eq. \eqref{ppppllk} can be expressed as
  \begin{equation}\label{i9}
    D_{x}^{\alpha}v(x)-2k v(x)\leq 0.
  \end{equation}
Since $\exp(-2kS^{\alpha}_F(x))\geq 0$ for all $x\in (a,b)\cap F$ we can multiply both members of Eq \eqref{i9} by $\exp(-2kS^{\alpha}_F(x)$ so we get that
  \begin{equation}
    D^{\alpha}_x\ v(x)\exp(-2kS_{F}^{\alpha})(x)\leq 0.
  \end{equation}
  Now, let $x\in (a,b)\cap F.$  Let us start by considering $x>x_{0},$ therefore the $F^{alpha}$- integral  from $x_{0}$ to $x$ of  $D^{\alpha}_x v(x)\exp(-2kS_{F}^{\alpha})(x)$   results
 \begin{equation}
    \exp(-2kS_{F}^{\alpha}(x))v(x)-\exp(-2kS_{F}^{\alpha}(x_{0}))v(x_{0})\leq 0.
  \end{equation}
  This leads to the inequality
  \begin{equation}\label{ooop}
   v(x)\leq v(x_{0})\exp(2k(S_{F}^{\alpha}(x)-S_{F}^{\alpha}(x_{0})))
  \end{equation}
  which yielding
  \begin{equation}\label{87ooop}
   \|\psi(x)\|\leq \|\psi(x_{0})\|\exp(2k(S_{F}^{\alpha}(x)-S_{F}^{\alpha}(x_{0}))),~~~(x>x_{0}).
  \end{equation}
  The corresponding left inequality in Eq.\eqref{ppppllk} implies
  \begin{equation}\label{8mn7ooop}
   \|\psi(x_{0})\|\exp(-2k(S_{F}^{\alpha}(x)-S_{F}^{\alpha}(x_{0})))\leq \|\psi(x)\|,~~~(x>x_{0}),
  \end{equation}
  which consequently leads to
  \begin{align}
   \|\psi(x_{0})\|\exp(-2k(S_{F}^{\alpha}(x)&-S_{F}^{\alpha}(x_{0})))\leq \|\psi(x)\|\leq \nonumber\\& \|\psi(x_{0})\|\exp(2k(S_{F}^{\alpha}(x)-S_{F}^{\alpha}(x_{0}))),
  \end{align}
  Now, considering Eq.\eqref{ppppllk} for $x<x_{0}$ along with a fractal integration from $x$ to $x_{0}$, we obtain
  \begin{align}\label{uuuuuuu}
   \|\psi(x_{0})\|\exp(2k(S_{F}^{\alpha}(x)&-S_{F}^{\alpha}(x_{0})))\leq \|\psi(x)\|\leq \nonumber\\& \|\psi(x_{0})\|\exp(-2k(S_{F}^{\alpha}(x)-S_{F}^{\alpha}(x_{0}))),
  \end{align}
  which completes the proof.
\begin{figure}[H]
  \centering
  \includegraphics[scale=0.5]{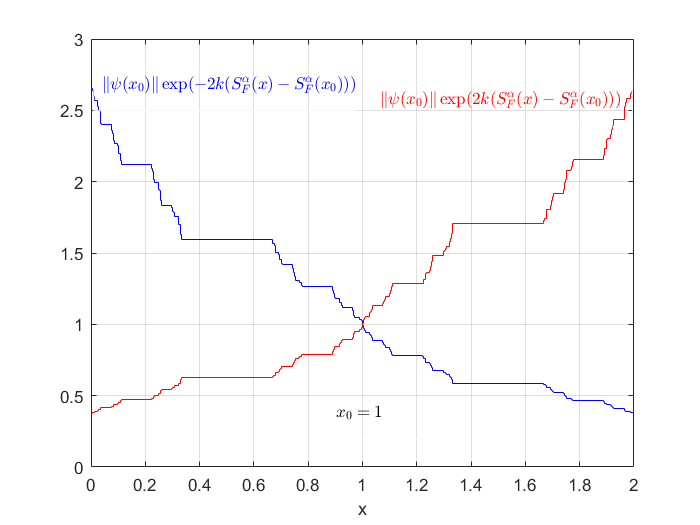}
  \caption{Graph of Eq.\eqref{ikmnxzsd}}\label{ffrrgf}
\end{figure}

In Figure \ref{ffrrgf}, it is evident that $|\psi(x)|$ consistently remains between the two curves $|\psi(x_{0})|\exp(2k(S_{F}^{\alpha}(x)-S_{F}^{\alpha}(x_{0})))$ and $|\psi(x_{0})|\exp(-2k(S_{F}^{\alpha}(x)-S_{F}^{\alpha}(x_{0})))$.

\end{proof}

\begin{theorem}
  (Uniqueness Theorem)
   Let $f_{0}$ and $D_{x}^{\alpha}f_{0}$ any two constants and let $x_{0}$ be any point in the fractal set $F$. On any interval $I\subset (a,b)\cap F$ containing $x_0$ there exists at most one solution $\psi (x)$
of the initial value problem
\begin{equation}\label{reeee}
aD^{2\alpha}_x f(x)+bD^{\alpha}_xf(x)+cf(x)=0 \quad f(x_{0})=f_{0}, \quad f^{\alpha}(x_{0})=D_{x}^{\alpha}f_{0}.
\end{equation}
  \end{theorem}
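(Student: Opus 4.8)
The plan is to reduce the uniqueness claim to the two-sided exponential bound established in the preceding Lemma, exploiting linearity. First I would use superposition: suppose $\psi_1(x)$ and $\psi_2(x)$ are both solutions on $I$ of the initial value problem in Eq.\eqref{reeee}, and set $\phi(x)=\psi_1(x)-\psi_2(x)$. Because the operator $f\mapsto aD_x^{2\alpha}f+bD_x^{\alpha}f+cf$ is linear and both $\psi_1,\psi_2$ annihilate it, $\phi$ again solves the homogeneous equation. Subtracting the two sets of initial data yields the homogeneous initial conditions $\phi(x_0)=f_0-f_0=0$ and $D_x^{\alpha}\phi(x_0)=D_x^{\alpha}f_0-D_x^{\alpha}f_0=0$.

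Next I would put the equation into the normalized form the Lemma requires. Since a genuine second $\alpha$-order equation has $a\neq 0$, I divide through by $a$ to obtain $D_x^{2\alpha}\phi+a_1 D_x^{\alpha}\phi + a_2\phi=0$ with $a_1=b/a$ and $a_2=c/a$, which is precisely Eq.\eqref{yuttwse}. Hence the Lemma applies to $\phi$, with the norm $\|\phi(x)\|=[|\phi(x)|^2+|D_x^{\alpha}\phi(x)|^2]^{1/2}$ and the constant $k=1+|a_1|+|a_2|$.

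The decisive step is then to evaluate the bound at the distinguished point. From the homogeneous initial conditions, $\|\phi(x_0)\|=[0^2+0^2]^{1/2}=0$. Feeding this into the upper estimate of Eq.\eqref{ikmnxzsd} gives, for every $x\in I$,
$$\|\phi(x)\|\leq \|\phi(x_0)\|\exp\!\left(k|S_F^{\alpha}(x)-S_F^{\alpha}(x_0)|\right)=0.$$
Since $\|\phi(x)\|\geq 0$ always, this forces $\|\phi(x)\|=0$, and in particular $|\phi(x)|=0$, throughout $I$. Therefore $\psi_1(x)=\psi_2(x)$ for every $x\in I\cap F$, which is exactly the asserted uniqueness.

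I expect essentially no obstacle beyond bookkeeping, since the analytic difficulty has already been absorbed into the Gronwall-type inequality of the Lemma. The only points demanding care are the harmless normalization $a\neq 0$ (needed to match the monic form of Eq.\eqref{yuttwse}) and the observation that the absolute-value exponent in Eq.\eqref{ikmnxzsd} already covers $x$ on either side of $x_0$, so the single bound is valid on all of $I$ rather than on one side of the initial point only.
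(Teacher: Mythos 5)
Your proof is correct and follows essentially the same route as the paper's: form the difference of two putative solutions, observe it satisfies the homogeneous equation with vanishing initial data, and apply the Lemma's exponential bound to force $\|\phi(x)\|=0$ on $I$. Your explicit normalization by $a\neq 0$ to match the monic form of Eq.\eqref{yuttwse} is a small point of care that the paper's proof passes over silently, but it does not change the argument.
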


  \begin{proof}
  Let us assume that there are two different solutions indicated by $\psi$ and $\phi$ of the assigned initial value problem.  Let $\chi=\psi-\phi$. Then, we have $\chi (x)$ satisfies the initial value problem with  the initial conditions $f_0 =D^{\alpha}_x f_0 = 0$.
Thus $\|\chi(x_0)\| = 0$ and by applying the inequalities Eq.\eqref{uuuuuuu}, of the previous Lemma, to $\chi (x)$, we get $\|\chi(x)\| = 0$ for all $x \in I\subset F \cap (a, b)$.
Therefore  $\psi (x) - \phi (x)=0$ for all $x\in I.$Thus, there exists at most one solution to the initial value problem \eqref{reeee}, establishing the uniqueness of the solution.
\end{proof}

\begin{example}
Consider second $\alpha$-order fractal differential equation as
\begin{equation}\label{e}
  D_{x}^{2\alpha}f(x)+5D_{x}^{\alpha}f(x)+6f(x)=0,~~~x\in F
\end{equation}
with initial condition
\begin{equation}\label{eqaioerllk}
  f(0)=2,~~~D_{x}^{\alpha}f_{0}=3,
\end{equation}
By using Eqs.\eqref{zaqqq} and  \eqref{eqaioerllk} we obtain solution of Eq.\eqref{e} as follows
\begin{align}\label{eq12}
  \psi(x)&=9\exp(-2S_{F}^{\alpha}(x))-7\exp(-3S_{F}^{\alpha}(x))\nonumber\\
  &\propto 9\exp(-2x^{\alpha})-7\exp(-3x^{\alpha})
\end{align}
 The figures illustrating the exact and approximate solutions of Eq. \eqref{e} can be observed in Figure \ref{95147} and Figure \ref{95147jj}, respectively.
\begin{figure}[H]
  \centering
 \includegraphics[scale=0.5]{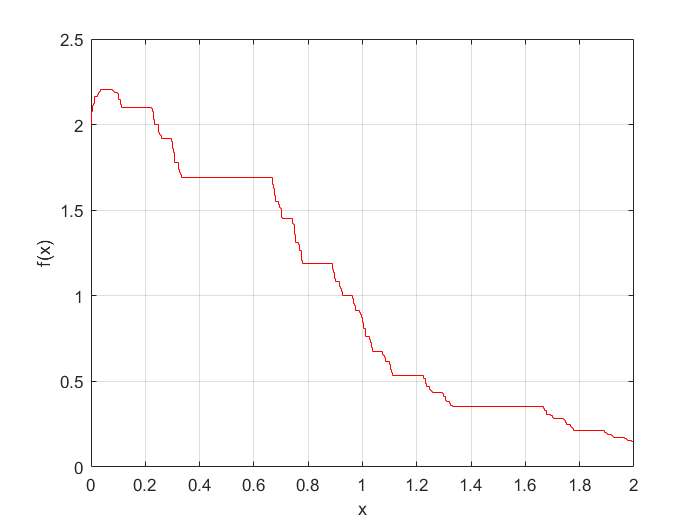}
  \caption{Solution of Eq.\eqref{e} with initial condition Eq.\eqref{eqaioerllk} for $\alpha=0.63$.
    }\label{95147}
\end{figure}
\begin{figure}[H]
  \centering
  \includegraphics[scale=0.5]{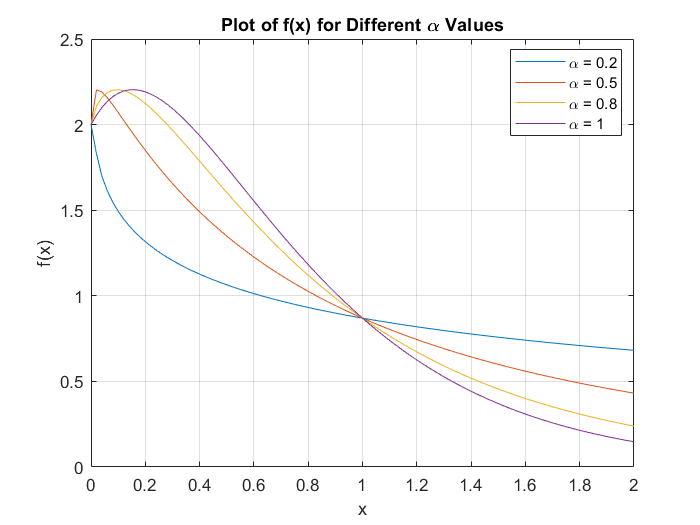}
  \caption{Approximate solution of Eq.\eqref{e} with initial condition Eq.\eqref{eqaioerllk} for different value of $\alpha$.
    }\label{95147jj}
\end{figure}
\end{example}
\begin{definition}
We say that the fractal dimension of the solution space of the second $\alpha$-order fractal differential equation is  $2\alpha$.
\end{definition}

\section{Exact Second $\alpha$-Order Fractal Differential Equation \label{4g}}
In this section, we establish the concept of homogeneous fractal calculus and describe the exact second $\alpha$-order fractal differential equation with $\alpha$ as a key parameter.

\begin{definition}
A  second $\alpha$-order fractal differential equation takes the form:
\begin{equation}\label{rreeetyi}
P(x)D_{x}^{2\alpha}f(x)+Q(x)D_{x}^{\alpha}f(x)+R(x)f(x)=0,
\end{equation}
It is considered exact when it can be transformed into the following form:
\begin{equation}\label{reeeaqw}
D_{x}^{\alpha}(P(x)D_{x}^{\alpha}f(x))+D_{x}^{\alpha}(g(x)f(x))=0,
\end{equation}
Where the function $g(x)$ can be determined in terms of $P(x)$, $Q(x)$, and $R(x)$.
\end{definition}

\begin{theorem}
The second $\alpha$-order fractal  differential equation, as given by Equation \eqref{rreeetyi}:
\begin{equation}\label{rtyi}
P(x)D_{x}^{2\alpha}f(x)+Q(x)D_{x}^{\alpha}f(x)+R(x)f(x)=0,
\end{equation}
is exact if
\begin{equation}\label{rqaweeelmnb}
D_{x}^{2\alpha}P(x)-D_{x}^{\alpha}Q(x)+R(x)=0.
\end{equation}
In other words, the equation is exact when the combination of the second $F^{\alpha}$-derivative of the coefficient function $P(x)$ and the $F^{\alpha}$-derivative of the coefficient function $Q(x)$, along with the term $R(x)$, equals zero.
\end{theorem}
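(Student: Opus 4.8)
The plan is to start from the putative exact form \eqref{reeeaqw} and expand it by means of the Leibniz rule for the $F^{\alpha}$-derivative, namely $D_{x}^{\alpha}(uv)=(D_{x}^{\alpha}u)\,v+u\,(D_{x}^{\alpha}v)$, valid for any pair of $F^{\alpha}$-differentiable functions $u,v$ (this product rule is a standard feature of $F^{\alpha}$-calculus). Applying it to the two summands of \eqref{reeeaqw} gives
\begin{align}
D_{x}^{\alpha}\bigl(P(x)D_{x}^{\alpha}f(x)\bigr)&=D_{x}^{\alpha}P(x)\,D_{x}^{\alpha}f(x)+P(x)D_{x}^{2\alpha}f(x),\nonumber\\
D_{x}^{\alpha}\bigl(g(x)f(x)\bigr)&=D_{x}^{\alpha}g(x)\,f(x)+g(x)D_{x}^{\alpha}f(x).\nonumber
\end{align}
Summing these and collecting terms by $D_{x}^{2\alpha}f$, $D_{x}^{\alpha}f$ and $f$ recasts \eqref{reeeaqw} as $P(x)D_{x}^{2\alpha}f+\bigl(D_{x}^{\alpha}P+g\bigr)D_{x}^{\alpha}f+\bigl(D_{x}^{\alpha}g\bigr)f=0$.

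Next I would compare this expansion coefficient-by-coefficient with the original equation \eqref{rtyi}. The coefficients of $D_{x}^{2\alpha}f$ already agree ($P=P$); matching the coefficients of $D_{x}^{\alpha}f$ forces the choice $g(x)=Q(x)-D_{x}^{\alpha}P(x)$, which is precisely the formula determining $g$ in terms of $P$ and $Q$ promised in the definition of exactness. Substituting this $g$ into the remaining requirement $D_{x}^{\alpha}g(x)=R(x)$ and using linearity of the $F^{\alpha}$-derivative yields $D_{x}^{\alpha}Q(x)-D_{x}^{2\alpha}P(x)=R(x)$, that is, $D_{x}^{2\alpha}P(x)-D_{x}^{\alpha}Q(x)+R(x)=0$, which is the asserted exactness condition \eqref{rqaweeelmnb}.

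To confirm sufficiency in the direction the theorem is stated, I would assume \eqref{rqaweeelmnb} holds, define $g(x):=Q(x)-D_{x}^{\alpha}P(x)$, and verify by the same expansion that \eqref{reeeaqw} reproduces \eqref{rtyi} term by term, the identity $D_{x}^{\alpha}g=R$ being exactly \eqref{rqaweeelmnb} rewritten. The main point to handle with care is regularity: the Leibniz rule and the appearance of $D_{x}^{2\alpha}P$ require $P$ to be twice $F^{\alpha}$-differentiable and $Q$ once $F^{\alpha}$-differentiable on $F\cap(a,b)$, so that $g$ is itself $F^{\alpha}$-differentiable and $D_{x}^{\alpha}g$ makes sense. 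I expect the genuine obstacle to be not the algebra, which is a direct coefficient match, but the clean justification of the product rule for $F^{\alpha}$-derivatives together with checking that every derivative written down actually exists in the $F\text{-}\lim$ sense on the fractal support.
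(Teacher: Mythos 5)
Your proof is correct and takes essentially the same route as the paper's: the paper's proof consists of the single instruction to equate the coefficients of \eqref{rtyi} and \eqref{reeeaqw} and eliminate $g(x)$, which is exactly what you do. Your version merely supplies the details the paper leaves implicit --- the Leibniz-rule expansion of \eqref{reeeaqw}, the explicit identification $g(x)=Q(x)-D_{x}^{\alpha}P(x)$, and the regularity needed for $D_{x}^{2\alpha}P$ and $D_{x}^{\alpha}Q$ to exist --- so no further changes are needed.
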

\begin{proof}
  To prove this theorem, we can equate the coefficients of Equations \eqref{rtyi} and \eqref{reeeaqw} and then eliminate $g(x)$, resulting in the equation:
  \begin{equation}
  D_{x}^{2\alpha}P(x)-D_{x}^{\alpha}Q(x)+R(x)=0.
  \end{equation}
\end{proof}

\begin{theorem}
Consider a second $\alpha$-order fractal homogeneous equation given by:
\begin{equation}\label{t44tyy}
  P(x)D_{x}^{2\alpha}f(x)+Q(x)D_{x}^{\alpha}f(x)+R(x)=0,
\end{equation}
If this equation is not exact, we can make it exact by multiplying it by a function $\mu(x)$, which is a solution of the following equation, often referred to as the adjoint equation associated with Equation \eqref{t44tyy}:
\begin{align}\label{1ioplki369}
 & P(x)D_{x}^{2\alpha}\mu(x)+(2 D_{x}^{\alpha}P(x)-Q(x))D_{x}^{\alpha}\mu(x)\nonumber\\
 &\quad+(D_{x}^{2\alpha}P(x)-D_{x}^{\alpha}Q(x)+R(x))\mu(x)=0.
\end{align}
\end{theorem}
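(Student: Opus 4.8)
The plan is to reduce the statement directly to the exactness criterion just established, namely that a second $\alpha$-order equation with coefficient functions $\tilde P,\tilde Q,\tilde R$ multiplying $D_{x}^{2\alpha}f$, $D_{x}^{\alpha}f$, $f$ is exact precisely when $D_{x}^{2\alpha}\tilde P-D_{x}^{\alpha}\tilde Q+\tilde R=0$. Multiplying Equation \eqref{t44tyy} by the integrating factor $\mu(x)$ produces a new equation of the same shape whose coefficients are $\tilde P(x)=\mu(x)P(x)$, $\tilde Q(x)=\mu(x)Q(x)$, and $\tilde R(x)=\mu(x)R(x)$. Thus the entire problem collapses to imposing the exactness condition on these three products and rearranging it into a differential equation for $\mu$.

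First I would invoke the Leibniz (product) rule for the $F^{\alpha}$-derivative, $D_{x}^{\alpha}(uv)=u\,D_{x}^{\alpha}v+v\,D_{x}^{\alpha}u$, valid wherever both factors are $F^{\alpha}$-differentiable on $F$; this is the single analytic tool the argument rests on. Applying it once gives $D_{x}^{\alpha}(\mu Q)=\mu\,D_{x}^{\alpha}Q+Q\,D_{x}^{\alpha}\mu$. Applying it to $\mu P$ and then once more to the result gives $D_{x}^{2\alpha}(\mu P)=\mu\,D_{x}^{2\alpha}P+2\,(D_{x}^{\alpha}P)(D_{x}^{\alpha}\mu)+P\,D_{x}^{2\alpha}\mu$, where the factor $2$ arises from the two coinciding cross terms produced by differentiating $\mu\,D_{x}^{\alpha}P$ and $P\,D_{x}^{\alpha}\mu$ separately.

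Next I would substitute these expansions into the exactness requirement $D_{x}^{2\alpha}(\mu P)-D_{x}^{\alpha}(\mu Q)+\mu R=0$ and group the result according to the derivatives of $\mu$. Collecting the coefficient of $D_{x}^{2\alpha}\mu$ yields $P$, the coefficient of $D_{x}^{\alpha}\mu$ yields $2D_{x}^{\alpha}P-Q$, and the coefficient of $\mu$ yields $D_{x}^{2\alpha}P-D_{x}^{\alpha}Q+R$. These are verbatim the coefficients of the adjoint equation \eqref{1ioplki369}, so any $\mu(x)$ solving that equation forces the $\mu$-multiplied version of \eqref{t44tyy} to satisfy the exactness criterion of the preceding theorem, which is exactly the claim.

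The algebraic bookkeeping is routine; the one genuine obstacle is justifying the second-order Leibniz expansion of $D_{x}^{2\alpha}(\mu P)$, since the second $F^{\alpha}$-derivative is defined by iterating the first, and the iteration of the product rule requires $\mu$ and $P$ (together with $D_{x}^{\alpha}\mu$ and $D_{x}^{\alpha}P$) to be sufficiently regular, i.e. to lie in $C^{2,\alpha}(a,b)$ so that each intermediate product is again $F^{\alpha}$-differentiable on $F$. I would state this regularity hypothesis explicitly before performing the expansion, so that every application of the product rule is legitimate at points of $F$.
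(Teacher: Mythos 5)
Your proposal is correct and takes essentially the same route as the paper: multiply Eq.~\eqref{t44tyy} by $\mu(x)$, impose the exactness condition on the new coefficients $\mu P$, $\mu Q$, $\mu R$, and expand with the fractal product rule to read off the coefficients $P$, $2D_{x}^{\alpha}P-Q$, and $D_{x}^{2\alpha}P-D_{x}^{\alpha}Q+R$ of the adjoint equation. The only difference is expository --- you invoke the preceding exactness-criterion theorem as a black box and carry out the Leibniz expansion of $D_{x}^{2\alpha}(\mu P)$ explicitly, stating the $C^{2,\alpha}$ regularity needed to justify it, whereas the paper re-equates coefficients against the exact form $D_{x}^{\alpha}(\mu P D_{x}^{\alpha}f)+D_{x}^{\alpha}(gf)=0$ and leaves that expansion implicit, so your write-up is in fact the more complete of the two.
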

\begin{proof}
  Consider the equation obtained by multiplying the given second $\alpha$-order fractal linear homogeneous equation by $\mu(x)$:

\begin{equation}\label{io33pplk}
\mu(x)P(x)D_{x}^{2\alpha}f(x)+\mu(x)Q(x)D_{x}^{\alpha}f(x)+\mu(x)R(x)=0,
\end{equation}

We can express Equation \eqref{io33pplk} in the following form:

\begin{equation}\label{rt655}
D_{x}^{\alpha}(\mu(x)P(x)D_{x}^{\alpha}f(x))+D_{x}^{\alpha}(g(x)f(x))=0,
\end{equation}

By equating the coefficients of Equations \eqref{io33pplk} and \eqref{rt655}, we can eliminate the function $g(x)$, revealing that the function $\mu(x)$ must satisfy the following equation:

\begin{align}\label{ioplki369}
& P(x)D_{x}^{2\alpha}\mu(x)+(2 D_{x}^{\alpha}P(x)-Q(x))D_{x}^{\alpha}\mu(x)\nonumber\\
&\quad+(D_{x}^{2\alpha}P(x)-D_{x}^{\alpha}Q(x)+R(x))\mu(x)=0.
\end{align}

This completes the proof, demonstrating the relationship between the function $\mu(x)$ and the original equation.

\end{proof}

\begin{lemma}
A second $\alpha$-order fractal  homogeneous equation, represented as:

\begin{equation}\label{ttrre23yy}
P(x)D_{x}^{2\alpha}f(x)+Q(x)D_{x}^{\alpha}f(x)+R(x)=0,
\end{equation}

can be referred to as self-adjoint if it satisfies the condition:

\begin{equation}\label{eww}
D_{x}^{\alpha}P(x)=Q(x).
\end{equation}
In simpler terms, the equation is considered self-adjoint when the $\alpha$-derivative of the coefficient function $P(x)$ is equal to the function $Q(x)$.
\end{lemma}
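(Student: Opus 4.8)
The plan is to read off self-adjointness directly from the adjoint equation already constructed in the previous theorem. By definition, Equation~\eqref{ttrre23yy} is \emph{self-adjoint} exactly when its associated adjoint equation, derived as Equation~\eqref{1ioplki369}, coincides with the original equation. Hence the whole argument reduces to a term-by-term comparison of the two second $\alpha$-order fractal differential operators.

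First I would name the two operators. The original operator is
\[L[f]=P(x)D_{x}^{2\alpha}f+Q(x)D_{x}^{\alpha}f+R(x)f,\]
while the adjoint operator, read off from the left-hand side of Equation~\eqref{1ioplki369}, is
\[L^{*}[f]=P(x)D_{x}^{2\alpha}f+\bigl(2D_{x}^{\alpha}P(x)-Q(x)\bigr)D_{x}^{\alpha}f+\bigl(D_{x}^{2\alpha}P(x)-D_{x}^{\alpha}Q(x)+R(x)\bigr)f.\]

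Next I would impose $L=L^{*}$ by matching the coefficients of $D_{x}^{2\alpha}f$, of $D_{x}^{\alpha}f$, and of $f$. The leading coefficients agree automatically, since both equal $P(x)$. Matching the coefficients of $D_{x}^{\alpha}f$ yields $Q(x)=2D_{x}^{\alpha}P(x)-Q(x)$, that is $2Q(x)=2D_{x}^{\alpha}P(x)$, which is precisely the asserted self-adjointness condition $D_{x}^{\alpha}P(x)=Q(x)$.

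The last step, and the only point requiring a moment's care, is to confirm that the remaining zeroth-order comparison imposes no additional restriction. Equating the coefficients of $f$ gives $R(x)=D_{x}^{2\alpha}P(x)-D_{x}^{\alpha}Q(x)+R(x)$, i.e.\ $D_{x}^{2\alpha}P(x)=D_{x}^{\alpha}Q(x)$. Applying the $F^{\alpha}$-derivative $D_{x}^{\alpha}$ to both sides of the condition $Q(x)=D_{x}^{\alpha}P(x)$ already obtained gives $D_{x}^{\alpha}Q(x)=D_{x}^{2\alpha}P(x)$, so this zeroth-order relation is automatically satisfied and is not an independent constraint. Thus $D_{x}^{\alpha}P(x)=Q(x)$ alone is both necessary and sufficient for $L=L^{*}$, which establishes the claim.
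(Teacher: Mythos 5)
Your proposal is correct and takes essentially the same route as the paper, whose entire proof is the remark that the result ``follows straightforwardly from Equation \eqref{ioplki369}'': you simply spell out that comparison by matching the coefficients of $D_{x}^{2\alpha}f$, $D_{x}^{\alpha}f$, and $f$ between the original operator and its adjoint. Your additional check that the zeroth-order condition $D_{x}^{2\alpha}P(x)=D_{x}^{\alpha}Q(x)$ is automatically implied by $Q(x)=D_{x}^{\alpha}P(x)$ is a worthwhile detail the paper leaves implicit, but it does not change the underlying argument.
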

\begin{proof}
 The proof follows straightforwardly from Equation \eqref{ioplki369}.
\end{proof}
\begin{example}
Consider a second $\alpha$-order fractal differential equation given by:

\begin{equation}\label{iopl8597}
16 D_{x}^{2\alpha}f(x)-8 D_{x}^{\alpha}f(x)+145 f(x)=0,
\end{equation}

with the following initial conditions:

\begin{equation}\label{iiiii951}
f(0)=-2,~~~D_{x}^{\alpha}f(x)|_{x=0}=1.
\end{equation}

The solution to this equation is given by:

\begin{align}\label{oollop}
f(x)&=-2\exp\bigg(\frac{S_{F}^{\alpha}(x)}{4}\bigg)\cos(3S_{F}^{\alpha}(x))
+\frac{1}{2}\exp\bigg(\frac{S_{F}^{\alpha}(x)}{4}\bigg)\sin(3S_{F}^{\alpha}(x))
\nonumber\\& \propto -2\exp\bigg(\frac{x^{\alpha}}{4}\bigg)\cos(3x^{\alpha}))
+\frac{1}{2}\exp\bigg(\frac{x^{\alpha}}{4}\bigg)\sin(3x^{\alpha})
\end{align}

\begin{figure}[H]
  \centering
  \includegraphics[scale=0.5]{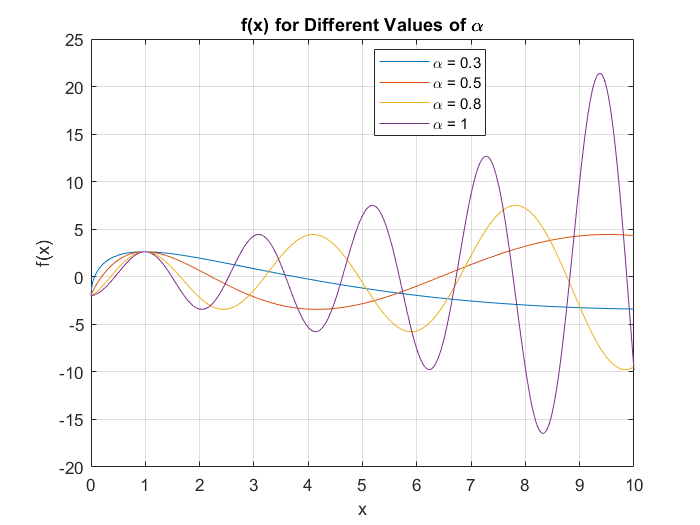}
  \caption{Graph of Eq.\eqref{oollop}.}\label{dsswwsa}
\end{figure}
In Figure \ref{dsswwsa}, we have graphed Eq.\eqref{oollop}, illustrating how varying dimensions influence the solution.
In simpler terms, this example illustrates a second $\alpha$-order fractal differential equation with specific initial conditions and provides the corresponding solution expressed in terms of trigonometric and exponential functions.
\end{example}

\begin{theorem}
  Let's consider a solution, denoted as $f_{1}(x)$, to a second $\alpha$-order fractal differential equation given by:
\begin{equation}\label{23eeeee}
D_{x}^{2\alpha}f(x)+p(x)D_{x}^{\alpha}f(x)+q(x)f(x)=0.
\end{equation}
Now, the second solution, denoted as $f(x)$, can be expressed as:
\begin{equation}\label{xza2}
f(x)=v(x)f_{1}(x),
\end{equation}
which satisfies the following equation:
\begin{equation}\label{erxsazq}
f_{1}(x)D_{x}^{2\alpha}v(x)+(2D_{x}^{\alpha} f_{1}(x)+p(x)f_{1}(x))D_{x}^{\alpha}v(x)=0.
\end{equation}
In essence, this theorem explains how to find a second solution to a second $\alpha$-order fractal differential equation when you already have one solution, and it provides a differential equation for the function $v(x)$ that relates the two solutions.
\end{theorem}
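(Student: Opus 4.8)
The plan is to carry out the classical reduction-of-order substitution, adapted to the $F^{\alpha}$-derivative. First I would record the Leibniz product rule for the $F^{\alpha}$-derivative, namely $D_{x}^{\alpha}(uw)=w\,D_{x}^{\alpha}u+u\,D_{x}^{\alpha}w$ for functions that are $F^{\alpha}$-differentiable at $x\in F$. This rule is already used implicitly in the proof of the preceding Lemma (compare the passage from Eq.\eqref{ijjjjn} to Eq.\eqref{i95jjjjn}, where the identity $D_{x}^{\alpha}(\psi^{2})=2\psi\,D_{x}^{\alpha}\psi$ is invoked), so I may take it as available.

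Next I would differentiate $f(x)=v(x)f_{1}(x)$ twice. Applying the product rule once gives
\[
D_{x}^{\alpha}f = f_{1}\,D_{x}^{\alpha}v + v\,D_{x}^{\alpha}f_{1},
\]
and applying it again to each summand yields
\[
D_{x}^{2\alpha}f = f_{1}\,D_{x}^{2\alpha}v + 2\,(D_{x}^{\alpha}f_{1})(D_{x}^{\alpha}v) + v\,D_{x}^{2\alpha}f_{1}.
\]
I would then substitute these two expressions into Eq.\eqref{23eeeee} and regroup the result according to the three quantities $D_{x}^{2\alpha}v$, $D_{x}^{\alpha}v$, and $v$. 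The coefficient of $D_{x}^{2\alpha}v$ is $f_{1}$; the coefficient of $D_{x}^{\alpha}v$ is $2D_{x}^{\alpha}f_{1}+p(x)f_{1}$; and the coefficient of $v$ is $D_{x}^{2\alpha}f_{1}+p(x)D_{x}^{\alpha}f_{1}+q(x)f_{1}$.

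The decisive step is to observe that the coefficient of $v$ vanishes identically, precisely because $f_{1}$ is by hypothesis a solution of Eq.\eqref{23eeeee}. Eliminating that term leaves exactly
\[
f_{1}(x)\,D_{x}^{2\alpha}v(x) + \bigl(2D_{x}^{\alpha}f_{1}(x)+p(x)f_{1}(x)\bigr)\,D_{x}^{\alpha}v(x)=0,
\]
which is Eq.\eqref{erxsazq}, completing the argument.

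I do not expect a genuine obstacle here, since the entire computation is purely algebraic once the Leibniz rule is in hand; the work is bookkeeping in applying the product rule twice. The only points that deserve care are the validity of the product rule at the $F^{\alpha}$-level and the tacit regularity hypothesis that $v$ be twice $F^{\alpha}$-differentiable on $F$, so that $f=vf_{1}$ belongs to $C^{2,\alpha}(a,b)$ and the second $F^{\alpha}$-derivatives appearing above are well defined.
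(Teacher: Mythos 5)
Your proposal is correct and follows essentially the same route as the paper's proof: compute $D_{x}^{\alpha}(vf_{1})$ and $D_{x}^{2\alpha}(vf_{1})$ via the product rule, substitute into Eq.~\eqref{23eeeee}, group by $D_{x}^{2\alpha}v$, $D_{x}^{\alpha}v$, and $v$, and observe that the coefficient of $v$ vanishes because $f_{1}$ solves the homogeneous equation. Your explicit remarks on the validity of the $F^{\alpha}$-Leibniz rule and the required two-times $F^{\alpha}$-differentiability of $v$ are points the paper leaves tacit, but they do not change the argument.
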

\begin{proof}
  To prove this theorem,  We can find the $F^{\alpha}$-derivatives of $f(x)$ with respect to $x$ as follows:
\begin{align}\label{iiki}
D_{x}^{\alpha}f(x) &= v(x)D_{x}^{\alpha}f_{1}(x) + D_{x}^{\alpha}v(x)f_{1}(x), \nonumber\\
D_{x}^{2\alpha}f(x) &= v(x)D_{x}^{2\alpha}f_{1}(x) + 2D_{x}^{\alpha}v(x)D_{x}^{\alpha}f_{1}(x) + D_{x}^{2\alpha}v(x)f_{1}(x).
\end{align}
Now, we substitute Equations \eqref{iiki} back into Eq.\eqref{23eeeee}:
\begin{align}
&f_{1}(x)D_{x}^{2\alpha}v(x)+(2D_{x}^{\alpha}f_{1}(x)+p(x)f_{1}(x))D_{x}^{\alpha}v(x)\nonumber\\&+
(D_{x}^{2\alpha}f_{1}(x)+p(x)D_{x}^{\alpha}f_{1}(x)+qf_{1}(x))v(x)=0
\end{align}
Since $f_{1}(x)$ is solution of Eq.\eqref{23eeeee}, then we have
\begin{equation}\label{2erxsazq}
f_{1}(x)D_{x}^{2\alpha}v(x) + (2D_{x}^{\alpha}f_{1}(x) + p(x)f_{1}(x))D_{x}^{\alpha}v(x) = 0.
\end{equation}
 This completes the proof of the theorem.
\end{proof}
\begin{example}
Let's consider the equation:

\begin{equation}\label{r82plm}
2S_{F}^{\alpha}(x)^{2}D_{x}^{2\alpha}f(x)+
3S_{F}^{\alpha}(x)D_{x}^{\alpha}f(x)-f(x)=0,
\end{equation}

where $f_{1}(x) = S_{F}^{\alpha}(x)^{-1}$ is one of the solutions.

To find a second fundamental solution, we propose $f(x)=v(x)S_{F}^{\alpha}(x)^{-1}$. Substituting this expression for $f(x)$, $D_{x}^{\alpha}f(x)$, and $D_{x}^{2\alpha}f(x)$ into Equation \eqref{r82plm} and collecting terms, we obtain:

\begin{equation}\label{iiiiii}
2S_{F}^{\alpha}(x)D_{x}^{2\alpha}v(x)-D_{x}^{\alpha}v(x)=0.
\end{equation}

Now, if we let $w(x)=D_{x}^{\alpha}v(x)$, we have a separable $\alpha$-order differential equation. Solving it, we find:

\begin{equation}\label{iii555}
w(x)=cS_{F}^{\alpha}(x)^{1/2}.
\end{equation}

From this, we can determine $v(x)$ as:

\begin{equation}\label{iooi951}
v(x)=\frac{2}{3}cS_{F}^{\alpha}(x)^{3/2}+k,
\end{equation}

and consequently, the solution $f(x)$ becomes:

\begin{align}\label{ppppppp}
f(x)&=\frac{2}{3}cS_{F}^{\alpha}(x)^{1/2}+kS_{F}^{\alpha}(x)^{-1}\nonumber\\&
\propto \frac{2}{3}c x^{\alpha/2}+kx^{-\alpha}
\end{align}
\begin{figure}
  \centering
  \includegraphics[scale=0.5]{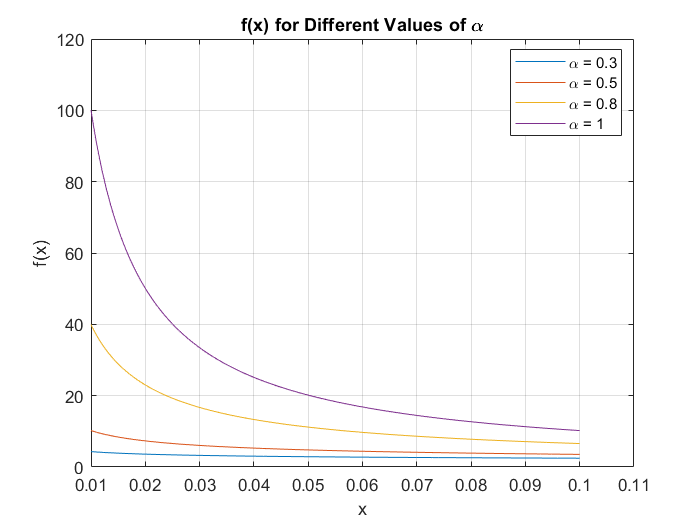}
  \caption{Graph of $f(x)\propto \frac{2}{3}c x^{\alpha/2}+kx^{-\alpha}$ for $c=k=1$. }\label{rrrcxxdde}
\end{figure}

In Figure labeled as Figure \ref{rrrcxxdde}, we have depicted an approximation of the equation referenced as Eq.\eqref{ppppppp}.
In summary, this example demonstrates how to find a second solution to the given differential equation, building on the knowledge of the first solution $f_{1}(x)$.
\end{example}
\section{Nonhomogeneous Second $\alpha$-order Fractal Differential Equation\label{5g}}
In this section, we will introduce and discuss nonhomogeneous second $\alpha$-order fractal differential equations. These equations involve both the second $\alpha$-order derivative of a function and a nonhomogeneous term, typically denoted as $g(x)$. Nonhomogeneous equations are important in modeling real-world phenomena where external influences or sources contribute to the behavior of the system. We will explore various aspects of these equations, including their solutions and properties.
\begin{definition}
Let us consider a nonhomogeneous $\alpha$-order   fractal differential equation as
\begin{equation}\label{ttyytre}
  L[f]=D_{x}^{2\alpha}f(x)+p(x)D_{x}^{\alpha}f(x)+q(x)f(x)=g(x),
\end{equation}
where $p,q$ and $g$ are given $F$-continuous on the $(a,b)$. If $g(x)=0$, namely,
\begin{equation}\label{olmnbvxz}
  L[f]=D_{x}^{2\alpha}f(x)+p(x)D_{x}^{\alpha}f(x)+q(x)f(x)=0
\end{equation}
which is called the homogenous fractal differential equation.
\end{definition}
\begin{theorem}\label{yuujjjjju84}
Consider a non-homogeneous linear fractal differential equation given by Equation \eqref{ttyytre}, where $F_{1}(x)$ and $F_{2}(x)$ are two solutions. Then, their difference $F_{1}-F_{2}$ is a solution of the corresponding homogeneous fractal differential equation, as given by Equation \eqref{olmnbvxz}.
Furthermore, if $f_{1}$ and $f_{2}$ form a fundamental set of solutions of the homogeneous fractal differential equation, then the difference $F_{1}-F_{2}$ can be expressed as:
\begin{equation}\label{ipmnbvczazaqw}
F_{1}-F_{2}=c_{1}f_{1}+c_{2}f_{2},
\end{equation}
where $c_{1}$ and $c_{2}$ are constants.
\end{theorem}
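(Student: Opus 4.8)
The plan is to exploit the linearity of the fractal differential operator $L$ and then reduce the representation statement to the uniqueness theorem already established. First I would record the structural fact that $L$ is linear: since the $F^{\alpha}$-derivative is defined through an $F$-limit of difference quotients, and such limits respect linear combinations, the operators $D_{x}^{\alpha}$ and $D_{x}^{2\alpha}$ are linear, whence $L[c_{1}u+c_{2}v]=c_{1}L[u]+c_{2}L[v]$ for all $F^{\alpha}$-differentiable $u,v$ and constants $c_{1},c_{2}$. With this in hand the first assertion is immediate: since $F_{1}$ and $F_{2}$ both solve Eq.~\eqref{ttyytre}, we have $L[F_{1}]=g$ and $L[F_{2}]=g$, so $L[F_{1}-F_{2}]=L[F_{1}]-L[F_{2}]=g-g=0$, and therefore $F_{1}-F_{2}$ solves the homogeneous equation Eq.~\eqref{olmnbvxz}. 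This part is routine.

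The substantive step is the representation $F_{1}-F_{2}=c_{1}f_{1}+c_{2}f_{2}$. Setting $\chi=F_{1}-F_{2}$, a solution of the homogeneous equation, I would fix a point $x_{0}\in(a,b)\cap F$. Because $\{f_{1},f_{2}\}$ is a fundamental set, the two solutions are linearly independent and their Wronskian $W[f_{1},f_{2}](x_{0})$ is nonzero, exactly as used in the existence argument around Eq.~\eqref{olpmnbvcxz}. Consequently the linear system
\begin{align*}
c_{1}f_{1}(x_{0})+c_{2}f_{2}(x_{0})&=\chi(x_{0}),\\
c_{1}D_{x}^{\alpha}f_{1}(x_{0})+c_{2}D_{x}^{\alpha}f_{2}(x_{0})&=D_{x}^{\alpha}\chi(x_{0})
\end{align*}
has a unique solution $(c_{1},c_{2})$. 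For these constants, the function $c_{1}f_{1}+c_{2}f_{2}$ is, by linearity, a homogeneous solution sharing both initial data $\chi(x_{0})$ and $D_{x}^{\alpha}\chi(x_{0})$ with $\chi$.

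The main obstacle — and the place where the fractal framework actually carries the weight — is concluding that two homogeneous solutions agreeing at $x_{0}$ in value and in $F^{\alpha}$-derivative must coincide throughout. I would close this by invoking the Uniqueness Theorem proved earlier: both $\chi$ and $c_{1}f_{1}+c_{2}f_{2}$ solve the same homogeneous initial value problem, hence they are equal for every $x$ in the interval, giving $F_{1}-F_{2}=c_{1}f_{1}+c_{2}f_{2}$. The one subtlety I would flag is that the uniqueness theorem and its supporting Lemma are stated with constant coefficients, whereas here $p$ and $q$ are merely $F$-continuous functions. I expect this to be harmless: on a compact subinterval the $F$-continuity of $p,q$ yields a bound, so the Gronwall-type estimate of the Lemma extends verbatim with the constant $k$ replaced by $k=1+\sup|p|+\sup|q|$ over that interval, and the uniqueness conclusion follows unchanged. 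Verifying this extension cleanly is the only nontrivial check in the argument.
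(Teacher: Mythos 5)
Your proposal is correct, and it is actually more complete than the paper's own proof. The paper's argument consists essentially of your first (routine) step only: it subtracts $L[F_{1}]=g$ and $L[F_{2}]=g$ and uses linearity to get $L[F_{1}-F_{2}]=0$, and then the representation \eqref{ipmnbvczazaqw} is simply asserted (``Then, we can conclude that\dots'') with no supporting argument, implicitly treating it as part of what ``fundamental set'' means. You supply the step the paper omits: setting $\chi=F_{1}-F_{2}$, matching its initial data at $x_{0}$ by solving the $2\times 2$ system whose determinant is the Wronskian (nonzero by linear independence, exactly the device used in the paper's existence theorem around Eq.~\eqref{olpmnbvcxz}), and then invoking the Uniqueness Theorem to conclude that $\chi$ and $c_{1}f_{1}+c_{2}f_{2}$, which solve the same homogeneous initial value problem, coincide. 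You also correctly flag the one genuine subtlety that the paper glosses over: the uniqueness theorem and its supporting lemma are stated and proved only for constant coefficients, whereas Eq.~\eqref{ttyytre} has $F$-continuous coefficients $p(x)$, $q(x)$. Your proposed repair is the natural one -- on a compact subinterval the constant $k$ becomes $1+\sup|p|+\sup|q|$, the supremum being finite because an $F$-continuous function is bounded on the compact set $F\cap[c,d]$ (by the usual sequential-compactness argument, since $F$ is closed), and the Gronwall-type estimate of the lemma then goes through unchanged. In short: your proof has the same skeleton as the paper's for the first claim, but for the second claim it is the argument the paper should have given rather than the bare assertion it actually gives.
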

\begin{proof}
  To prove this theorem, we start by noting that for the non-homogeneous linear fractal differential equation:

\begin{equation}\label{iiiq}
L[F_{1}]=g(x),~~~ L[F_{2}]=g(x),
\end{equation}
where $L$ represents the differential operator, and $g(x)$ is the non-homogeneous term, we have two solutions $F_{1}(x)$ and $F_{2}(x)$.
Now, by subtracting these equations \eqref{iiiq}, we obtain:
\begin{equation}\label{oippllm}
L[F_{1}]-L[F_{2}]=L[F_{1}-F_{2}]=0.
\end{equation}

Then, we can conclude that:

\begin{equation}\label{ipmnb887vczazaqw}
F_{1}-F_{2}=c_{1}f_{1}+c_{2}f_{2},
\end{equation}

where $c_{1}$ and $c_{2}$ are constants. This completes the proof.
\end{proof}
\begin{theorem}
  The general solution of the nonhomogeneous fractal differential equation given by \eqref{olmnbvxz} can be represented in the form:

\begin{equation}\label{ipplllll}
f(x)=\psi(x)=c_{1}f_{1}(x)+c_{2}f_{2}(x)+F(x),
\end{equation}

Here, $f_{1}$ and $f_{2}$ are fundamental solutions of the corresponding homogeneous equation \eqref{olmnbvxz}, $c_{1}$ and $c_{2}$ are arbitrary constants, and $F(x)$ is any solution of the nonhomogeneous equation \eqref{ttyytre}. This form allows us to describe the general solution of the nonhomogeneous equation in terms of both homogeneous solutions and a particular solution.
\end{theorem}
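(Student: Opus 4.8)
The plan is to establish the stated representation by proving two inclusions, relying on the linearity of the operator $L$ and on the preceding Theorem \ref{yuujjjjju84}. First I would verify that every function of the form $c_{1}f_{1}(x)+c_{2}f_{2}(x)+F(x)$ actually solves the nonhomogeneous equation \eqref{ttyytre}. This is the easy direction: applying $L$ and using its linearity together with the $F^{\alpha}$-derivative rules from Section \ref{1g}, one computes
\begin{equation}
L[c_{1}f_{1}+c_{2}f_{2}+F]=c_{1}L[f_{1}]+c_{2}L[f_{2}]+L[F]=c_{1}\cdot 0+c_{2}\cdot 0+g(x)=g(x),
\end{equation}
since $f_{1},f_{2}$ solve the homogeneous equation \eqref{olmnbvxz} and $F$ solves \eqref{ttyytre}.

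For the converse, I would take an arbitrary solution $\psi(x)$ of \eqref{ttyytre} and show it must have the claimed form. Since both $\psi$ and $F$ satisfy $L[\cdot]=g$, their difference $\psi-F$ satisfies $L[\psi-F]=0$ by linearity, hence $\psi-F$ is a solution of the homogeneous equation. At this point I would invoke Theorem \ref{yuujjjjju84}, which guarantees that the difference of two solutions of the nonhomogeneous equation lies in the span of any fundamental set $\{f_{1},f_{2}\}$ of the homogeneous equation; thus there exist constants $c_{1},c_{2}$ with $\psi-F=c_{1}f_{1}+c_{2}f_{2}$, giving $\psi=c_{1}f_{1}+c_{2}f_{2}+F$ as required.

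Taken together, the two directions show that the family $\{c_{1}f_{1}+c_{2}f_{2}+F\}$ is exactly the solution set of \eqref{ttyytre}, which is the assertion. I do not anticipate a serious obstacle, since the argument is the familiar superposition principle for linear equations. The only point requiring care is the implicit appeal to the fact that a \emph{fundamental} set $\{f_{1},f_{2}\}$ exhausts all homogeneous solutions — this is precisely what Theorem \ref{yuujjjjju84} supplies (through the Wronskian and linear-independence reasoning already used in the existence-uniqueness theorem of Section \ref{3g}), so I would cite it explicitly rather than re-derive it, and I would note that the labelling of the theorem statement intends the nonhomogeneous equation \eqref{ttyytre} throughout.
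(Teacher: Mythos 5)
Your proposal is correct and takes essentially the same approach as the paper: the paper's entire proof is your converse direction, namely applying Theorem \ref{yuujjjjju84} with $F_{1}=\psi$ and $F_{2}=F$ to conclude $\psi-F=c_{1}f_{1}+c_{2}f_{2}$. Your additional check that every function $c_{1}f_{1}+c_{2}f_{2}+F$ does solve the nonhomogeneous equation is a correct completeness step the paper leaves implicit, and your remark that the theorem's reference to \eqref{olmnbvxz} should read \eqref{ttyytre} correctly identifies a typo in the statement.
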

\begin{proof}
  The proof of this theorem follows directly from Theorem \ref{yuujjjjju84}. If we consider $F_{1}$ to be $\psi(x)$ and $F_{2}$ to be $F(x)$, then we immediately obtain Equation \eqref{ipplllll}. This demonstrates that the general solution in the form stated in the theorem is indeed valid.
\end{proof}
\begin{example}
Let's consider a nonhomogeneous fractal differential equation:

\begin{equation}\label{reqazxcfr}
D_{x}^{2\alpha}f(x)-3D_{x}^{\alpha}f(x)-4f(x)=3\exp(2S_{F}^{\alpha}(x))
\end{equation}

We are looking for a particular solution $F(x)$ that satisfies the following equation:

\begin{equation}\label{ertttta}
D_{x}^{2\alpha}F(x)-3D_{x}^{\alpha}F(x)-4F(x)=3\exp(2S_{F}^{\alpha}(x))
\end{equation}

To find this particular solution $F(x)$, let us assume that the solution of Equation \eqref{ertttta} can be written as $F(x)=A\exp(2S_{F}^{\alpha}(x))$. Substituting this into Equation \eqref{ertttta}, we get:

\begin{equation}\label{rtrre58}
(4A-6A-4A)\exp(2S_{F}^{\alpha}(x)) =3\exp(2S_{F}^{\alpha}(x))
\end{equation}

Solving for $A$, we find that $A=-1/2$. Therefore, the particular solution $F(x)$ is:
\begin{equation}\label{erreeaaq}
F(x)=-\frac{1}{2}\exp(2S_{F}^{\alpha}(x))
\end{equation}
This provides a specific solution to the nonhomogeneous fractal differential equation \eqref{reqazxcfr}.
\end{example}
\begin{theorem}[Fractal Variation of Parameters]
Consider a nonhomogeneous second $\alpha$-order linear fractal differential equation:

\begin{equation}\label{oopp987}
L[f(x)]= D_{x}^{2\alpha}f(x)+p(x)D_{x}^{\alpha}f(x)+q(x)f(x)=g(x)
\end{equation}

Assuming that the functions $p(x)$, $q(x)$, and $g(x)$ are $F$-continuous on the open interval $(a,b)$, and that $f_{1}(x)$ and $f_{2}(x)$ form a fundamental set of solutions for the corresponding homogeneous fractal equation:

\begin{equation}\label{iiooppkll}
D_{x}^{2\alpha}f(x)+p(x)D_{x}^{\alpha}f(x)+q(x)f(x)=0
\end{equation}

Then, a particular solution of Equation \eqref{oopp987} can be expressed as:

\begin{equation}\label{iinnnbvcx5}
F(x)=-f_{1}\int_{x_{0}}^{x}\frac{f_{2}(s)g(s)}{W[f_{1},f_{2}]}d_{F}^{\alpha}s+
f_{2}\int_{x_{0}}^{x}\frac{f_{1}(s)g(s)}{W[f_{1},f_{2}]}d_{F}^{\alpha}s
\end{equation}

where $x_{0}$ is any conveniently chosen point in the open interval $(a,b)$. The general solution of Equation \eqref{oopp987} is then:

\begin{equation}\label{rreeeqaq}
f(x)=c_{1}f_{1}(x)+c_{2}f_{2}(x)+F(x)
\end{equation}

Here, $c_{1}$ and $c_{2}$ are constants. This theorem provides a method to find both particular and general solutions for nonhomogeneous second $\alpha$-order linear fractal differential equations under certain conditions on the functions involved.
\end{theorem}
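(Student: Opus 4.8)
The plan is to mimic the classical method of variation of parameters, transporting each step into the $F^{\alpha}$-calculus by replacing the ordinary derivative with $D_{x}^{\alpha}$ and ordinary integration with the $F^{\alpha}$-integral $\int d_{F}^{\alpha}$. First I would seek a particular solution in the form $F(x)=u_{1}(x)f_{1}(x)+u_{2}(x)f_{2}(x)$, where $u_{1},u_{2}$ are unknown $F^{\alpha}$-differentiable functions to be determined, and impose the auxiliary constraint
\[
(D_{x}^{\alpha}u_{1})f_{1}+(D_{x}^{\alpha}u_{2})f_{2}=0,
\]
so that the expression for $D_{x}^{\alpha}F$ does not involve the second $F^{\alpha}$-derivatives of $u_{1},u_{2}$.

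The key computational tool is the fractal Leibniz (product) rule $D_{x}^{\alpha}(uv)=u\,D_{x}^{\alpha}v+v\,D_{x}^{\alpha}u$, valid on $F$, which I would invoke from the cited fractal-calculus framework. Applying it, the constraint reduces $D_{x}^{\alpha}F$ to $u_{1}D_{x}^{\alpha}f_{1}+u_{2}D_{x}^{\alpha}f_{2}$; differentiating once more with the same rule and substituting into $L[F]$, the terms multiplying $u_{1}$ and $u_{2}$ collect into $L[f_{1}]$ and $L[f_{2}]$, which vanish because $f_{1},f_{2}$ solve the homogeneous equation \eqref{iiooppkll}. What survives is the second equation
\[
(D_{x}^{\alpha}u_{1})(D_{x}^{\alpha}f_{1})+(D_{x}^{\alpha}u_{2})(D_{x}^{\alpha}f_{2})=g(x).
\]

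Then I would solve the resulting $2\times 2$ linear system for $D_{x}^{\alpha}u_{1}$ and $D_{x}^{\alpha}u_{2}$ by Cramer's rule; its determinant is precisely the Wronskian $W[f_{1},f_{2}]$, which is nonzero on $F$ because $f_{1},f_{2}$ form a fundamental set (as guaranteed by the Wronskian argument already used in the uniqueness theorem). This yields $D_{x}^{\alpha}u_{1}=-f_{2}g/W[f_{1},f_{2}]$ and $D_{x}^{\alpha}u_{2}=f_{1}g/W[f_{1},f_{2}]$. Finally, applying the fractal fundamental theorem of calculus, so that $u_{i}(x)=\int_{x_{0}}^{x}D_{s}^{\alpha}u_{i}\,d_{F}^{\alpha}s$, recovers $u_{1}$ and $u_{2}$ and hence the stated formula for $F(x)$; the representation \eqref{rreeeqaq} of the general solution then follows directly from the earlier theorem on nonhomogeneous equations.

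The main obstacle is not the algebra, which is a faithful transcription of the classical derivation, but the two structural facts that make the transcription legitimate: the product rule for the first and second $F^{\alpha}$-derivatives, and the fundamental theorem linking $D_{x}^{\alpha}$ with $\int d_{F}^{\alpha}$ on the fractal support. I would either cite these from the fractal-calculus references or, more cleanly, exploit the conjugacy $x\mapsto S_{F}^{\alpha}(x)$ under which $F^{\alpha}$-differentiation becomes ordinary differentiation in the variable $\theta=S_{F}^{\alpha}(x)$; this conjugacy reduces the whole statement to the classical variation-of-parameters theorem and simultaneously certifies that $W[f_{1},f_{2}]$ cannot vanish where $f_{1},f_{2}$ are linearly independent.
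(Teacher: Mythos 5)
Your proposal follows essentially the same route as the paper's own proof: the ansatz $F=u_1f_1+u_2f_2$ (the paper's $A,B$), the auxiliary constraint killing the $D_x^{\alpha}u_i$ terms in the first derivative, collapse of the $u_1,u_2$ terms via $L[f_1]=L[f_2]=0$, Cramer's rule on the resulting $2\times 2$ system with the Wronskian as determinant, and $F^{\alpha}$-integration to recover $u_1,u_2$. Your write-up is in fact slightly more careful than the paper's — you explicitly flag the fractal product rule, the fundamental theorem linking $D_x^{\alpha}$ to $\int d_F^{\alpha}$, and the nonvanishing of $W[f_1,f_2]$, and your formulas $D_x^{\alpha}u_1=-f_2g/W$, $D_x^{\alpha}u_2=f_1g/W$ are the correct ones (the paper's displayed expression for $B(x)$ contains an evident typo, repeating $f_2(s)g(s)$ where $f_1(s)g(s)$ is required).
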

\begin{proof}
To establish this theorem, we begin by considering a general solution for the nonhomogeneous equation:

\begin{equation}\label{traq2}
F(x)=A(x)f_{1}(x)+B(x)f_{2}
\end{equation}

Here, $A(x)$ and $B(x)$ are unknown functions, and $f_{1}$ and $f_{2}$ are the solutions to the homogeneous Eq.\eqref{iiooppkll}. Since this equation introduces two unknown functions, it's appropriate to impose an additional condition. We choose the following conditions:

\begin{equation}\label{yiiii}
D_{x}^{\alpha}A(x)f_{1}(x)+D_{x}^{\alpha}B(x)f_{2}(x)=0.
\end{equation}

Now, let us compute the fractal derivatives of $F(x)$:

\begin{align}\label{yyyyfff}
D_{x}^{\alpha} F(x)&=D_{x}^{\alpha}(A(x)f_{1}(x)+B(x)f_{2})\nonumber\\&
=A(x)D_{x}^{\alpha}f_{1}(x)+B(x)D_{x}^{\alpha}f_{2}
\end{align}

Differentiating once more:

\begin{align}\label{y33yyyfff}
D_{x}^{2\alpha} F(x)&=A(x)D_{x}^{2\alpha}f_{1}(x)+B(x)D_{x}^{2\alpha}f_{2}
\nonumber\\&+D_{x}^{\alpha}A(x)D_{x}^{\alpha}f_{1}(x)
+D_{x}^{\alpha}B(x)D_{x}^{\alpha}f_{2}
\end{align}

Now, we can express the action of $L$ on $F(x)$ as:

\begin{align}\label{yiopklji}
L[F(x)]&=A(x)L[f_{1}(x)]+B(x)L[f_{2}(x)]+
D_{x}^{\alpha}A(x)D_{x}^{\alpha}f_{1}(x)\nonumber\\&
+D_{x}^{\alpha}B(x)D_{x}^{\alpha}f_{2}
\end{align}

Since $f_{1}(x)$ and $f_{2}(x)$ are solutions to the homogeneous equation, we have:

\begin{equation}
L[F(x)]=D_{x}^{\alpha}A(x)D_{x}^{\alpha}f_{1}(x)
+D_{x}^{\alpha}B(x)D_{x}^{\alpha}f_{2}
\end{equation}

This leads to the system of equations:

\begin{equation}\label{ijbhyujn}
\left[
\begin{array}{cc}
f_{1}(x) & f_{2}(x) \\
D_{x}^{\alpha}f_{1}(x) & D_{x}^{\alpha}f_{2}(x) \\
\end{array}
\right]\left[
\begin{array}{c}
D_{x}^{\alpha}A(x) \\
D_{x}^{\alpha} B(x)\\
\end{array}
\right]=\left[
\begin{array}{c}
0 \\
g(x) \\
\end{array}
\right]
\end{equation}

To determine $A(x)$ and $B(x)$ from these conditions, we solve this system, resulting in:

\begin{align}\label{rreeww}
A(x)&=-\int_{x_{0}}^{x} \frac{f_{2}(s)g(s)}{W[f_{1},f_{2}]}d_{F}^{\alpha}s\nonumber\\
B(x)&=\int_{x_{0}}^{x} \frac{f_{2}(s)g(s)}{W[f_{1},f_{2}]}d_{F}^{\alpha}s
\end{align}
By substituting Eq.\eqref{rreeww} into Eq.\eqref{traq2}, we obtain the general solution for the nonhomogeneous equation. This concludes the proof.
\end{proof}

\begin{example}
  Consider the equation describing the motion of an undamped forced oscillator:

\begin{equation}\label{iooopkmnbvcx}
m D_{t}^{2\alpha}f(t)+k f(t)=F_{0}\cos(\omega S_{F}^{\alpha}(t))
\end{equation}

This equation represents the behavior of the oscillator, where $m, k, F_{0}$ and $\omega$ are constants. The initial conditions for this oscillator are given as:
\begin{equation}\label{iiooo74447}
f(0)=0,~~~D_{t}^{\alpha}f(t)\big|_{t=0}=0
\end{equation}
To find the general solution for Equation \eqref{iooopkmnbvcx}, which describes the oscillator's motion, we obtain the following expression:

\begin{equation}\label{ioopktfr}
f(t)=c_{1}\cos(\omega_{0} S_{F}^{\alpha}(t))+c_{2}\sin(\omega_{0} S_{F}^{\alpha}(t))+\frac{F_{0}}{m(\omega_{0}^2-\omega^2)}\cos(\omega S_{F}^{\alpha}(t))
\end{equation}

Here, $\omega_{0}=\sqrt{k/m}$ represents the natural frequency of the oscillator. By applying the initial conditions from Equation \eqref{iiooo74447}, we further simplify the solution to:

\begin{align}\label{w9586}
f(t)&=\frac{F_{0}}{m(\omega_{0}^2-\omega^2)}(\cos(\omega S_{F}^{\alpha}(t))-\cos(\omega_{0} S_{F}^{\alpha}(t)))\nonumber\\&
\propto \frac{F_{0}}{m(\omega_{0}^2-\omega^2)}(\cos(\omega t^{\alpha})-\cos(\omega_{0} t^{\alpha}))
\end{align}

In the accompanying Figure \ref{8999oplmki}, we have depicted the behavior described by Equation \eqref{w9586} for the given parameters: $\omega_{0}=1,~\omega=0.8,$ and $F_{0}=1/2$.

\begin{figure}[H]
  \centering
  \includegraphics[scale=0.5]{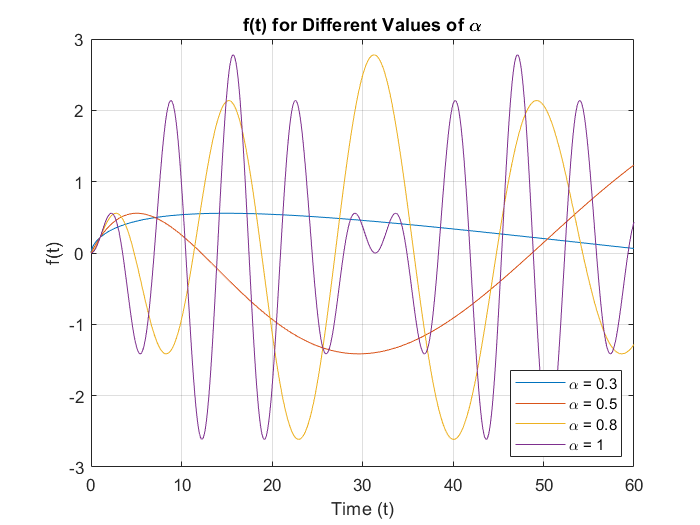}
  \caption{Graph of $f(t)=2.778\sin(0.1t^{\alpha})\sin(0.9)t^{\alpha}$}\label{8999oplmki}
\end{figure}

\end{example}

\section{Conclusion  \label{6g}}
In our paper, we have introduced the concept of second-order fractal differentials, denoted as $\alpha$-order, along with a method for solving them. We have also defined a solution space for these differentials, which encompasses non-integer dimensions. Furthermore, we have presented a uniqueness theorem for second $\alpha$-order linear fractal differential equations, and we have provided an exact formulation for a second-order fractal differential equation. This equation is complemented by its adjoint equation, making it self-adjoint. In addition, we have defined and successfully solved nonhomogeneous second $\alpha$-order fractal differential equations. The models we have presented can be applied to processes occurring in fractal time and space.\\
\textbf{Declaration of Competing Interest:}\\
The authors declare that they have no known competing financial interests or personal relationships that could have appeared to influence the work reported in this paper.\\
\textbf{CRediT author statement:}\\
Alireza.K.Golmankhnaeh : Conceptualization, Investigation, Methodology, Software, Writing- Original draft preparation.\\
Donatella Bongiorno : Investigation, Methodology, Validation, Writing- Original draft preparation, Reviewing and Editing.\\

\section{References}
\bibliographystyle{elsarticle-num}
\bibliography{Refrancesma10}






\end{document}